\documentclass[10pt]{article}
\usepackage{mathrsfs}
\usepackage{amssymb}
\usepackage{amsmath}
\usepackage[all]{xy}
\usepackage{amsthm}
\usepackage{amsfonts}
\usepackage{latexsym}
\usepackage{amscd,amssymb,amsopn,amsmath,amsthm,graphics,amsfonts,mathrsfs,accents,enumerate,verbatim,calc}
\usepackage[dvips]{graphicx}
\usepackage[colorlinks=true,linkcolor=red,citecolor=blue]{hyperref}


\newtheorem{thm}{Theorem}[section]
\newtheorem{prop}[thm]{Proposition}

\newtheorem{lem}[thm]{Lemma}

\newtheorem{cor}[thm]{Corollary}
\newtheorem{df}[thm]{Definition}
\newtheorem{exa}[thm]{Example}

\newtheorem{rem}[thm]{Remark}

\newtheorem{Para}[thm]{}
\newcommand{\A}{\mathscr{A}}
\newcommand{\B}{\mathscr{B}}

\newcommand{\X}{\mathscr{X}}

\newcommand{\C}{\mathscr{C}}
\newcommand{\K}{\mathbb{K}}
\newcommand{\D}{\mathbb{D}}

\def\Aus{\mathop{\rm Aus}\nolimits}

\def\Im{\mathop{\rm Im}\nolimits}
\def\Ker{\mathop{\rm Ker}\nolimits}
\def\Coker{\mathop{\rm Coker}\nolimits}

\def\mod{\mathop{\rm mod}\nolimits}

\def\Hom{\mathop{\rm Hom}\nolimits}
\def\End{\mathop{\rm End}\nolimits}
\def\Ext{\mathop{\rm Ext}\nolimits}
\def\Tor{\mathop{\rm Tor}\nolimits}

\def\pd{\mathop{\rm pd}\nolimits}

\def\Gpd{\mathop{\rm Gpd}\nolimits}

\def\proj{\mathop{\rm proj}\nolimits}

\def\Gproj{\mathop{\rm Gproj}\nolimits}

\def\Gpd{\mathop{\rm Gpd}\nolimits}
\def\inf{\mathop{\rm inf}\nolimits}

\def\Con{\mathop{\rm Con}\nolimits}
\def\Im{\mathop{\rm Im}\nolimits}
\setlength{\textwidth}{154mm} \setlength{\textheight}{218mm}
\setlength{\topmargin}{-6mm} \setlength{\oddsidemargin}{8mm}

\begin{document}

\title{\large \bf When the Schur functor induces a triangle-equivalence between Gorenstein defect categories
\thanks{{\it 2010 Mathematics Subject Classification}: 18E30, 18E35, 18G20.}
\thanks{{\it Keywords}: Schur functors; Triangle-equivalences; singularity categories; Gorenstein defect categories; triangular matrix algebras.}
}
\author{Huanhuan Li$^a$, Jiangsheng Hu$^b$ and Yuefei Zheng$^{c}$\footnote{Corresponding author} \\
\it\footnotesize $^a$School of Mathematics and Statistics, Xidian University, Xi'an 710071, Shaanxi Province, China\\
\it\footnotesize $^b$School of Mathematics and Physics, Jiangsu University of Technology, Changzhou 213001, Jiangsu Province, China\\
\it\footnotesize $^c$College of Science, Northwest A$\&$F University, Yangling 712100, Shaanxi Province, China\\
\it\footnotesize Email addresses: lihh@xidian.edu.cn, jiangshenghu@jsut.edu.cn and zhengyf@nwafu.edu.cn
}
\date{}
\baselineskip=14pt
\maketitle
\begin{abstract} Let $R$ be an Artin algebra and $e$ an idempotent of $R$. Assume that $\Tor_i^{eRe}(Re,G)=0$ for any $G\in\Gproj eRe$ and $i$ sufficiently large. Necessary and sufficient conditions are given for the Schur functor $S_e$ to induce a triangle-equivalence $\D_{def}(R)\simeq\D_{def}(eRe)$. Combine this with a result of Psaroudakis-Skartsaterhagen-Solberg \cite{PSS}, we provide necessary and sufficient conditions for the singular equivalence $\D_{sg}(R)\simeq\D_{sg}(eRe)$ to restrict to a triangle-equivalence $\underline{\Gproj R}\simeq\underline{\Gproj eRe}$. Applying these to the triangular matrix algebra $T=\left(
                                                                                 \begin{array}{cc}
                                                                                   A & M \\
                                                                                   0 & B \\
                                                                                 \end{array}
                                                                               \right)$, corresponding results between candidate categories of $T$ and $A$ (resp. $B$) are obtained. As a consequence, we infer Gorensteinness and CM-freeness of $T$ from those of $A$ (resp. $B$). Some concrete examples are given to indicate one can realise the Gorenstein defect category of a triangular matrix algebra as the singularity category of one of its corner algabras. 
\end{abstract}

%

\section{\bf Introduction}

Let $R$ be an Artin algebra. In the study of stable homological algebra and Tate cohomology, Buchweitz \cite{Bu} introduced the singularity category $\D_{sg}(R)$ of $R$. This category is a certain Verdier quotient of the bounded derived category of $\mod R$ by modulo the bounded homotopy category of $\proj R$, where $\mod R$ denotes the category of finitely generated left $R$-modules and $\proj R$ is its subcategory consisting of projective modules.
Later on, this category was reconsidered by Orlov \cite{O} in the setting of algebraic geometry and turned out to have a closed relation with the ``Homological Mirror Symmetry Conjecture''.
The singularity category of $R$ measures the ``regularity'' of $R$ in sense that $\D_{sg}(R)=0$ if and only if $R$ is of finite global dimension.

By the fundamental result in \cite{Bu}, the stable category $\underline{\Gproj R}$ of finitely generated Gorenstein projective modules might be regarded as a triangulated subcategory of $\D_{sg}(R)$ via an triangulated embedding functor $F$. Besides, $F:\underline{\Gproj R}\to\D_{sg}(R)$ is a triangle-equivalence provided that $R$ is Gorenstein \cite{Bu,H2}. Inspired by this, Bergh, J{\o}rgensen and Oppermann (\cite{BJO}) considered the Verdier quotient $\D_{def}(R):=\D_{sg}(R)/\Im F$, and they called it the Gorenstein defect category of $R$.
This category measures how far the algebra $R$ is from being Gorenstein. More precisely, $R$ is Gorenstein if and only if $\D_{def}(R)$ is trivial.
Nowadays, singularity categories and related topics have been studied by many authors, see for example \cite{C1,C2,CZ,LL,L,R1,ZP}.

It is of interest to consider when two Artin algebras share the same singularity category up to a triangle-equivalence. In this case, we call these two algebras singular equivalent and such an equivalence is called a singular equivalence. There has been a lot of work by many people to find certain conditions when two algebras are singular equivalent \cite{C1,C3,C4,PSS}.
Referring to Buchweitz's work (\cite[Theorem 4.4.1]{Bu}), it is natural to ask: when a singular equivalence restricts to a triangle-equivalence between stable categories of Gorenstein projective modules? Unfortunately, the answer of the questions is unknown in general.
However, in view that the Gorenstein defect category is a Verdier quotient of the singularity category by modulo the isomorphic image of stable category of Gorenstein projective modules.
It is not hard to see one necessary condition for the establishment of the above question is the existence of a triangle-equivalence between their Gorenstein defect categories.
Besides, due to Kong-Zhang \cite{KZ}, the Gorenstein defect category of a CM-finite algebra is equivalent to the singularity category of its relative Auslander algebra. So for two CM-finite algebras, a triangle-equivalence between their Gorenstein defect categories coincides with a singular equivalence of their relative Auslander algebras. Meanwhile, the study of triangle-equivalences between Gorenstein defect categories is useful for some special algebras. For instance, let $A$ be a connected Artin algebra with radical square zero. Following Chen's work (\cite{C2'}), $A$ is either self-injective or CM-free. So if another algebra $B$ has the same Gorenstein defect category with $A$ (up to a triangle-equivalence), then either $B$ is Gorenstein or $B$ has $\D_{sg}(A)$ as its Gorenstein defect category. This provides an effective way to detect the Gorensteinness and CM-freeness of some certain triangular matrix algebra from those of its corner algebras (see Theorems \ref{thm:1.3} and \ref{thm:1.2}).
For the mentioned reasons, it is necessary to study when two algebras have the same Gorenstein defect category.

Let $R$ be an Artin algebra and $e\in R$ an idempotent. The Schur functor associative to $e$ is defined to be $S_e=eR\otimes_R-:\mod R\to \mod eRe$ (\cite{G}).
The idempotent $e$ is said to be {\it singularly-complete} if the projective dimension of any $R/ReR$-module is finite as an $R$-module.
It was shown in \cite{C1} that if $e$ is singularly-complete and the projective dimension $\pd_{eRe}eR<\infty$, then the Schur functor induces a singular equivalence $\D_{sg}(R)\simeq\D_{sg}(eRe)$. Later on, this result was generalized by Psaroudakis-Skartsaterhagen-Solberg \cite{PSS}, who gave necessary and sufficient conditions for such singular equivalence by using the technique of recollements of abelian categories. Thanks to these beautiful work, our main result (Theorem \ref{thm:1.1}) gives necessary and sufficient conditions for the Schur functor to induce a triangle-equivalence $\D_{def}(R)\simeq\D_{def}(eRe)$.
As a consequence, we give an affirmative answer for the singular equivalence $\D_{sg}(R)\simeq\D_{sg}(eRe)$ to restrict to a triangle-equivalence $\underline{\Gproj R}\simeq\underline{\Gproj eRe}$ (see Corollary \ref{cor:3.3}). Then some applications in the triangular matrix algebra and some concrete examples are given. The outline of this paper is as follows.

In Section \ref{Gdc}, we deal with the subcategory $\D^b(\mod R)_{fgp}$ of $\D^b(\mod R)$ consisting of complexes with finite Gorenstein projective dimension. Use it, we give a description of the stable category $\underline{\Gproj R}$ of Gorenstein projective modules and the Gorenstein defect category $\D_{def}(R)$ (see Theorem \ref{thm:A.5}). Consequently, the converse of Buchweitz's Theorem is proved.

Let $e$ be an idempotent of $R$ and $\Tor_i^{eRe}(Re,G)=0$ for any $G\in\Gproj eRe$ and $i$ sufficiently large. In Section \ref{Gde}, necessary and sufficient conditions are given for the Schur functor $S_e$ to induce a triangle-equivalence $\D_{def}(R)\simeq\D_{def}(eRe)$ (see Theorem \ref{thm:1.1}). Combine this with \cite[Corollary 5.4]{PSS}, we obtain necessary and sufficient conditions to
get an exact commutative diagram $$\xymatrix{0\ar[r]&\underline{\Gproj R}\ar[r]\ar[d]
&\D_{sg}(R)\ar[r]\ar[d]
& \D_{def}(R)\ar[d]\ar[r]&0\\
0\ar[r]&\underline{\Gproj eRe}\ar[r]
&\D_{sg}(eRe)\ar[r]
&\D_{def}(eRe)\ar[r]&0
}$$
with all vertical functors triangle-equivalences (see Corollary \ref{cor:3.3}).

In Section \ref{app}, we apply Theorem \ref{thm:1.1} and Corollary \ref{cor:3.3} to the triangular matrix algebra $T=\left(
                                                                                 \begin{array}{cc}
                                                                                   A & M \\
                                                                                   0 & B \\
                                                                                 \end{array}
                                                                               \right)$, where the corner algebras $A$ and $B$ are Artin algebras and $_AM_B$ is an $A$-$B$-bimodule. To take $R=T$ and $e=e_A=\left(
                                                                                                                                                                                                                \begin{array}{cc}
                                                                                                                                                                                                                  1 & 0\\
                                                                                                                                                                                                                  0 & 0 \\
                                                                                                                                                                                                                \end{array}
                                                                                                                                                                                                              \right)$ (resp. $e=e_B=\left(
                                                                                                                                                                                                                \begin{array}{cc}
                                                                                                                                                                                                                  0 & 0\\
                                                                                                                                                                                                                  0 & 1 \\
                                                                                                                                                                                                                \end{array}
                                                                                                                                                                                                              \right)$) as in Theorem \ref{thm:1.1} and Corollary \ref{cor:3.3},
we get the corresponding results between candidate categories of $T$ and $A$ (resp. $B$), see Theorems \ref{thm:1.3} and \ref{thm:1.2} for details. As a consequence, we could infer Gorensteinness (resp. CM-freeness) of the triangular matrix algebra $T$ from that of its corner algebra $A$ (resp. $B$). Finally, some explicit examples are given to indicate one can realise the Gorenstein defect category of a triangular matrix algebra as the singularity category of one of the corner algebras.

\section{Gorenstein defect categories}\label{Gdc}

Throughout, all algebras are Artin algebras over some commutative Artinian ring and all modules are finitely generated. For a given algebra $R$,
denote by $\mod R$ the category of left R-modules; right $R$-modules are viewed as left $R^{op}$-modules, where $R^{op}$ is the opposite algebra of $R$.
We use $\proj R$ to denote the subcategories of $\mod R$ consisting of projective modules. The $*$-bounded derived category of $\mod R$ and homotopy category of $\proj R$
will be denoted by $\D^*(\mod R)$ and $\K^*(\proj R)$ respectively, where $*\in\{blank,\ +,\ -, \ b\}$.

Usually, we use $_RM$ (resp. $M_R$) to denote a left (resp. right) $R$-module $M$, and the projective dimension of $_RM$ (resp. $M_R$) will be denoted by $\pd_RM$ (resp. $\pd M_R$).
For a subclass $\X$ of $\mod R$. Denote by $\X^\bot$ (resp. $^\bot\X$) the subcategory consisting of modules $M\in \mod R$ such that $\Ext_R^1(X,M)=0$ (resp. $\Ext_R^1(M,X)=0$) for any $X\in\X$.

\vspace{0.2cm}

Let $$X^\bullet=\cdots\to X^{-1}\xrightarrow{d^{-1}}X^{0}\xrightarrow{d^{0}} X^{1}\to\cdots$$
be a complex in $\mod R$. For any integer $n$, we set $Z^n(X^\bullet)=\Ker d^n$, $B^n(X^\bullet)=\Im d^{n-1}$ and $H^n(X^\bullet)=Z^n(X^\bullet)/B^n(X^\bullet)$. $X^{\bullet}$ is called  acyclic (or exact) if $H^{n}(X^{\bullet})=0$ for any $n\in \mathbb{Z}$.

Recall from \cite{AB,AM,Ho} that an acyclic complex $X^\bullet$ is called {\it totally acyclic} if each $X^i\in\proj R$ and $\Hom_R(X^\bullet,R)$ is acyclic. A module $M\in\mod R$ is {\it Gorenstein projective} if there exists some totally acyclic complex $X^\bullet$ such that $M\cong Z^0(X^\bullet)$.
Denote by $\Gproj R$ the subcategory of $\mod R$ consisting of Gorenstein projective modules.
Given a module $M\in\mod R$, the {\it Gorenstein projective dimension} $\Gpd_RM$ of $M$ is defined to be $\Gpd_RM=\inf\{n:$ there exists an exact sequence $0\to G_n\to \cdots\to G_1\to G_0\to M\to0$, where each $G_i\in\Gproj R$$\}$.

%
%

It is well-known that $\Gproj R$ is a Frobenius category, and hence its stable category $\underline{\Gproj R}$ is a triangulated category (\cite{H1}).
Consider the composition of the embedding functor $\Gproj R\hookrightarrow\D^b(\mod R)$ and the localization functor $\D^b(\mod R)\to\D_{sg}(R)$. It induces a functor $F:\underline{\Gproj R}\to\D_{sg}(R)$, which sends every Gorenstein projective module to the stalk complex concentrated in degree zero.

\begin{lem}(Buchweitz's Theorem, see \cite[Theorem 4.4.1]{Bu}) Keep the above notations. The canonical functor $F:\underline{\Gproj R}\to\D_{sg}(R)$ is an embedding triangle-functor. Furthermore, $F$ is a triangle-equivalence provided that $R$ is Gorenstein (that is, the left and right self-injective dimensions of $R$ are finite).
\end{lem}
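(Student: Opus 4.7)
The plan is to establish three properties in order: that $F$ is a triangle-functor, that it is fully faithful, and that it is essentially surjective when $R$ is Gorenstein. For the first, I would use that the triangulation on $\underline{\Gproj R}$ is induced by its Frobenius exact structure, so the suspension is the cosyzygy $\Omega^{-1}$ and every short exact sequence $0\to X\to Y\to Z\to 0$ in $\Gproj R$ gives rise to a standard triangle; the same sequence produces a distinguished triangle in $\D^b(\mod R)$, and hence in $\D_{sg}(R)$. To check compatibility of suspensions, I would take a short exact sequence $0\to G\to P\to \Omega^{-1}G\to 0$ with $P\in\proj R$; since $P$ is zero in $\D_{sg}(R)$, the resulting triangle identifies $F(\Omega^{-1}G)$ with $F(G)[1]$ naturally in $G$.

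For fullness and faithfulness, I would compute morphisms $\Hom_{\D_{sg}(R)}(F(G),F(G'))$ for $G,G'\in\Gproj R$ using the Verdier calculus of roofs. The key ingredient is that $\Ext^i_R(G,P)=0$ for all $i>0$ and $P\in\proj R$; by induction on the length of bounded projective complexes, this extends to $\Hom_{\D^b(\mod R)}(G,C^\bullet[i])=0$ for all $i>0$ and $C^\bullet\in\K^b(\proj R)$. Given a roof $G\xleftarrow{s}Z^\bullet\xrightarrow{f}G'$ with $\mathrm{cone}(s)\in\K^b(\proj R)$, applying $\Hom_{\D^b(\mod R)}(G,-)$ to the triangle $Z^\bullet\xrightarrow{s}G\to \mathrm{cone}(s)\to Z^\bullet[1]$ shows that $s$ admits a lift of $\mathrm{id}_G$, so the roof becomes equivalent to an honest morphism $G\to G'$; this yields fullness. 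For faithfulness, a morphism $f:G\to G'$ vanishing in $\D_{sg}(R)$ factors in $\D^b(\mod R)$ as $G\to C^\bullet\to G'$ with $C^\bullet\in\K^b(\proj R)$; using a smart truncation of $C^\bullet$ together with the same vanishing, I would reduce to the case where $f$ factors through a single projective module in $\mod R$, so that $f$ represents zero in $\underline{\Gproj R}$.

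For essential surjectivity when $R$ is Gorenstein of self-injective dimension $d$, I would exploit that every $M\in\mod R$ satisfies $\Gpd_R M\le d$. This produces a short exact sequence $0\to K\to G\to M\to 0$ with $G\in\Gproj R$ and $\pd_R K<\infty$; since $K$ is then isomorphic in $\D^b(\mod R)$ to a bounded complex of projectives, $F(G)\cong M$ in $\D_{sg}(R)$. For a general $X^\bullet\in\D^b(\mod R)$, a sufficiently high syzygy in a bounded-above projective resolution of $X^\bullet$ is Gorenstein projective, and the corresponding stalk complex is isomorphic to a shift of $X^\bullet$ in $\D_{sg}(R)$. The hard part will be the faithfulness step: converting a factorisation through an arbitrary bounded projective complex $C^\bullet$ into a factorisation through a single projective module requires careful control via the Gorenstein vanishing and truncation arguments, and is the technical heart of the theorem.
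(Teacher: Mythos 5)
The paper offers no proof of this lemma---it is quoted directly from Buchweitz \cite[Theorem 4.4.1]{Bu}---so your sketch has to be judged on its own merits. The triangle-functor step (Frobenius structure on $\Gproj R$, identification $F(\Omega^{-1}G)\cong F(G)[1]$ via $0\to G\to P\to\Omega^{-1}G\to0$) and the essential-surjectivity step (the approximation $0\to K\to G\to M\to0$ with $\pd_RK<\infty$, plus high syzygies for complexes) are correct and standard.

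The full-faithfulness step, however, has a genuine gap. First, the asserted vanishing $\Hom_{\D^b(\mod R)}(G,C^\bullet[i])=0$ for all $i>0$ and \emph{all} $C^\bullet\in\K^b(\proj R)$ is false: for $C^\bullet=P[-n]$ with $P$ projective and $n>0$ one gets $\Hom_{\D^b(\mod R)}(G,C^\bullet[n])\cong\Hom_R(G,P)\neq0$; d\'evissage only gives vanishing for $i$ outside the amplitude of $C^\bullet$. Second, and more seriously, lifting $\mathrm{id}_G$ along $s$ requires the composite $G\to\mathrm{cone}(s)$ to vanish in $\D^b(\mod R)$ --- a vanishing at $i=0$, which your cited fact does not address and which genuinely fails: for any nonzero $g\colon G\to R$, take $Z^\bullet=\mathrm{cone}(g)[-1]$ with $s\colon Z^\bullet\to G$ the induced map, whose cone is the stalk complex $R$; the obstruction to lifting $\mathrm{id}_G$ is $g$ itself. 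So a roof is \emph{not} equivalent to an honest morphism by this argument. The correct route --- and it is the technical heart of Buchweitz's theorem for fullness just as much as for faithfulness --- is the lemma that any morphism in $\D^b(\mod R)$ from a Gorenstein projective $G$ to a bounded complex of projectives factors through a single projective module; this uses the coresolution half of a complete resolution $T^\bullet$ of $G$ and the acyclicity of $\Hom_R(T^\bullet,Q)$ for $Q\in\proj R$. Only with that in hand can one correct the roof by a map factoring through $\proj R$ and land in $\underline{\Hom}_R(G,G')$. You flag the difficulty on the faithfulness side but present fullness as routine; as written it is not.
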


According to Buchweitz's Theorem, $\Im F$ is a triangulated subcategory of $\D_{sg}(R)$.

\begin{df}\cite{BJO} {\rm We call the Verdier quotient $\D_{def}(R):=\D_{sg}(R)/\Im F$ the {\it Gorenstein defect category} of $R$.
}\end{df}

Recall from \cite{C2'} that $R$ is called {\it CM-free} if $\Gproj R=\proj R$. While $R$ is called {\it CM-finite} (\cite{Be2}) if $\Gproj R$ is of finite representation type. Let $R$ be CM-finite and $\{G_1,G_2,\cdots,G_n\}$ the
set of all pairwise non-isomorphic indecomposable Gorenstein projective modules. Recall from \cite{KZ} that the opposite of the endomorphism algebra $\Aus(R):={\End(\bigoplus\limits_{1\leq i\leq n}G_i)}^{op}$
is called the {\it relative Auslander algebra} of $R$.

\begin{rem}\label{rem:2.4} {\rm
\begin{enumerate}
\item[]
\item[(1)] It is not hard to see $R$ is CM-free if and only if $\D_{def}(R)=\D_{sg}(R)$.
\item[(2)] Following \cite[Corollary 6.10]{KZ}, if $R$ is CM-finite, then there is a triangle-equivalence $\D_{def}(R)\simeq\D_{sg}(\Aus(R))$.
\end{enumerate}
}
\end{rem}


Recently, the Gorenstein defect category $\D_{def}(R)$ was reconsidered by Kong and Zhang \cite{KZ}. What's more, they found $\D_{def}(R)$ is triangle equivalent to $\D^b(\mod R)/\langle\Gproj R\rangle$, where $\langle\Gproj R\rangle$ is the triangulated subcategory of $\D^b(\mod R)$ generated by $\Gproj R$. We wonder what the exact form of objects in $\langle\Gproj R\rangle$. We introduce the following.

\begin{df}\label{df:2.4} {\rm A complex $X^\bullet\in\D^b(\mod R)$ is said to have {\it finite Gorenstein projective dimension} if $X^\bullet$ is isomorphic to some bounded complex consisting of Gorenstein projective modules in $\D^b(\mod R)$.
}\end{df}

We note that the finiteness of Gorenstein projective dimension for a complex in Definition \ref{df:2.4} coincides with that of \cite{Vel}, see for example \cite[Construction 5.5]{Vel}. Denote by $\D^b(\mod R)_{fgp}$ the subcategory of $\D^b(\mod R)$ consisting of complexes with finite Gorenstein projective dimension.

%

\begin{lem}\label{lem:2.5} Let $X^\bullet\in\D^b(\mod R)$. Then the following are equivalent:
\begin{enumerate}
\item[(1)] $X^\bullet\in\D^b(\mod R)_{fgp}$.
\item[(2)]For any quasi-isomorphism $P^\bullet\to X^\bullet$ with $P^\bullet\in\K^{-,b}({\proj R})$, one has $Z^{i}(P^\bullet)\in\Gproj R$ for $i\ll0$, where $\K^{-,b}(\proj R)$ is the full subcategory of $\K^{-}(\proj R)$ consisting of complexes with finite nonzero homology.
\item[(3)] There exists a quasi-isomorphism $P^\bullet\to X^\bullet$ with $P^\bullet\in\K^{-,b}({\proj R})$ such that $Z^{i}(P^\bullet)\in\Gproj R$ for $i\ll0$.
\end{enumerate}
\end{lem}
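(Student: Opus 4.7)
The plan is to prove the cycle $(2) \Rightarrow (3) \Rightarrow (1) \Rightarrow (2)$. The first two implications are elementary; the last is the substantive one, and I would handle it via a triangulated-subcategory argument.

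For $(2) \Rightarrow (3)$, the standard iterated-syzygy construction (available since $\mod R$ has enough projectives and $X^\bullet$ has bounded cohomology) produces at least one quasi-isomorphism $P^\bullet \to X^\bullet$ with $P^\bullet \in \K^{-,b}(\proj R)$, to which (2) immediately applies. For $(3) \Rightarrow (1)$, take such a $P^\bullet$ and fix $n \ll 0$ with both $H^i(P^\bullet) = 0$ for $i \leq n$ and $Z^n(P^\bullet) \in \Gproj R$. I would form the bounded complex
$$G^\bullet : 0 \to Z^n(P^\bullet) \hookrightarrow P^n \xrightarrow{d^n} P^{n+1} \to \cdots \to P^N \to 0,$$
with $Z^n(P^\bullet)$ placed in degree $n-1$. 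Since $H^n(P^\bullet) = 0$ forces $\Im d^{n-1} = Z^n(P^\bullet)$, the differential $d^{n-1}$ factors through $Z^n(P^\bullet)$, yielding a natural chain map $P^\bullet \to G^\bullet$ which a direct cohomology calculation shows to be a quasi-isomorphism. Every term of $G^\bullet$ lies in $\Gproj R$ (using $\proj R \subseteq \Gproj R$ and the choice of $n$), so $X^\bullet \in \D^b(\mod R)_{fgp}$.

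For $(1) \Rightarrow (2)$, I would introduce the class $\mathcal{T}$ consisting of those $Y^\bullet \in \D^b(\mod R)$ such that \emph{every} quasi-isomorphism $P^\bullet \to Y^\bullet$ with $P^\bullet \in \K^{-,b}(\proj R)$ satisfies $Z^i(P^\bullet) \in \Gproj R$ for $i \ll 0$, and show (a) $\mathcal{T}$ is a triangulated subcategory of $\D^b(\mod R)$ and (b) $\mathcal{T}$ contains every stalk complex $G[-k]$ with $G \in \Gproj R$. For (b), a projective resolution of such a stalk is (up to a shift) an ordinary projective resolution of $G$, whose deep syzygies are iterated syzygies $\Omega^j G$ and remain Gorenstein projective, since the kernel of a surjection from a projective module onto a Gorenstein projective is again Gorenstein projective. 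For (a), shift-closure is immediate; cone-closure comes from lifting a given triangle $Y_1^\bullet \to Y_2^\bullet \to Y_3^\bullet \to \Sigma Y_1^\bullet$ (with $Y_1^\bullet, Y_3^\bullet \in \mathcal{T}$) via the mapping-cone construction to a termwise split short exact sequence $0 \to P_1^\bullet \to P_2^\bullet \to P_3^\bullet \to 0$ of projective resolutions; the resulting short exact sequences of syzygies $0 \to Z^i(P_1^\bullet) \to Z^i(P_2^\bullet) \to Z^i(P_3^\bullet) \to 0$ then force $Z^i(P_2^\bullet) \in \Gproj R$ for $i \ll 0$ by extension-closure of $\Gproj R$. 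The ``any projective resolution'' quantifier is handled via uniqueness of such resolutions up to isomorphism in $\K^{-}(\proj R)$, modulo projective direct summands in sufficiently negative degrees. Once (a) and (b) are established, $\mathcal{T}$ contains the triangulated subcategory generated by those stalks, which accommodates every bounded complex of Gorenstein projectives via iterated brutal-truncation triangles $\sigma^{\geq k} G^\bullet \to G^\bullet \to \sigma^{<k} G^\bullet \to \Sigma \sigma^{\geq k} G^\bullet$, so any $X^\bullet$ as in (1) lies in $\mathcal{T}$, proving (2).

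The main obstacle will be the cone-closure step for $\mathcal{T}$: one must produce a termwise split short exact sequence of projective complexes genuinely lifting the given derived-category triangle and carefully synchronise the thresholds ``$i \ll 0$'' across the three complexes when invoking extension-closure of $\Gproj R$. The supporting homological inputs (closure of $\Gproj R$ under extensions, syzygies, and direct summands; uniqueness of projective resolutions up to homotopy) are standard.
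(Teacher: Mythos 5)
Your proof is correct, and your handling of $(2)\Rightarrow(3)$ and $(3)\Rightarrow(1)$ coincides with the paper's (existence of a resolution in $\K^{-,b}(\proj R)$, then truncation at a deep Gorenstein projective cycle). Where you genuinely diverge is $(1)\Rightarrow(2)$. The paper does this with a single mapping-cone computation: given the quasi-isomorphism $P^\bullet\to X^\bullet$ and an isomorphism $X^\bullet\cong G^\bullet$ in $\D^b(\mod R)$ with $G^\bullet$ a bounded complex of Gorenstein projectives, it realises the composite as an honest chain map $f:P^\bullet\to G^\bullet$ (possible since $P^\bullet$ is a bounded-above complex of projectives, hence semiprojective), notes that $\Con(f)$ is an acyclic bounded-above complex all of whose terms lie in $\Gproj R$, and reads off $Z^{i}(P^\bullet)\cong Z^{i-1}(\Con(f))\in\Gproj R$ for $i\ll0$ using only the closure of $\Gproj R$ under kernels of epimorphisms. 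Your route --- introducing the class $\mathcal{T}$, proving it is a triangulated subcategory by lifting triangles to termwise split short exact sequences of resolutions, and then doing d\'evissage over brutal truncations of $G^\bullet$ --- reaches the same conclusion, but at the cost of exactly the two delicate points you flag: synchronising the thresholds in the cone-closure step (which works because the connecting map lands in vanishing cohomology, so the cycle sequences are short exact for $i\ll0$ and extension-closure of $\Gproj R$ applies) and the ``any resolution'' quantifier (which reduces to Schanuel plus closure of $\Gproj R$ under projective summands). The paper's single-cone argument absorbs all of that d\'evissage into one acyclic complex and is the shorter path; your version has the mild benefit of isolating reusable closure properties of the class of complexes satisfying (2), but for this lemma it is strictly more work.
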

\begin{proof} (1)$\Longrightarrow$(2). Let $P^\bullet\to X^\bullet$ be a quasi-isomorphism with $P^\bullet\in\K^{-,b}({\proj R})$. Since $X^\bullet\in\D^b(\mod R)_{fgp}$, we get $X^\bullet\cong G^\bullet$ in $\D^b(\mod R)$ with $G^\bullet$ a bounded complex consisting of Gorenstein projective modules. It follows that $P^\bullet\cong G^\bullet$ in $\D^b(\mod R)$ and then there is a quasi-isomorphism $f:P^\bullet\to G^\bullet$ by \cite[1.4.P]{AFHd}. Hence the mapping cone $\Con(f)=\cdots\to P^{-n-1}\to P^{-n}\to P^{-n+1}\oplus G^{-n}\to\cdots$
is acyclic. Note that $\Con(f)$ is bounded above with each degree in $\Gproj R$ and $\Gproj R$ is closed under kernels of epimorphisms (see \cite[Theorem 2.5]{Ho}). We conclude that $Z^{i}(P^\bullet)\cong Z^{i-1}(\Con(f))\in\Gproj R$ for $i\ll0$.

(2)$\Longrightarrow$(3) is trivial.

(3)$\Longrightarrow$(1). Let $P^\bullet\to X^\bullet$ be a quasi-isomorphism with $P^\bullet\in\K^{-,b}({\proj R})$ and $Z^{i}(P^\bullet)\in\Gproj R$ for $i\ll0$. Since $P^\bullet\in\K^{-,b}({\proj R})$, $P^\bullet$ is isomorphic to the following complex in $\D^b(\mod R)$
$$G^\bullet:=0\to Z^{t}(P^\bullet)\to P^{t}\to\cdots\to P^{s-1}\to P^s\to0,$$
where $s$ is the supremum of index $i\in\mathbb{Z}$ such that $P^i\neq0$ and $t$ is the index such that $Z^{i}(P^\bullet)\in\Gproj R$ and $H^i(P^\bullet)=0$ for any $i\leq t$. Hence $X^\bullet\cong G^\bullet$ in $\D^b(\mod R)$ with $G^\bullet$ a bounded complex consisting of Gorenstein projective modules and then $X^\bullet\in\D^b(\mod R)_{fgp}$.
\end{proof}

\begin{rem}\label{rem:2.7}{\rm It follows from Lemma \ref{lem:2.5} and \cite[Theorem 3.1]{AM} that an $R$-module $M$ viewed as a stalk complex concentrated in degree zero lies in $\D^b(\mod R)_{fgp}$ if and only if $\Gpd_RM<\infty$.
}\end{rem}

Let $X^\bullet$ be a complex of $R$-modules. The {\it length} $l(X^\bullet)$ of $X^\bullet$ is defined to be the cardinal of the set $\{X^i\neq0|i\in\mathbb{Z}\}$. Let $n\in\mathbb{Z}$, denote by $X^\bullet_{\geqslant n}$ the complex with the $i$th component equal to $X^i$ whenever $i\geqslant n$ and to 0 elsewhere.
\begin{thm}\label{thm:A.3} $\D^b(\mod R)_{fgp}$ is a thick subcategory of $\D^b(\mod R)$.  Furthermore, $\D^b(\mod R)_{fgp}=\langle \Gproj R\rangle$.
\end{thm}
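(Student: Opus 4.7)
The plan is to exploit Lemma~\ref{lem:2.5}(3) as the working characterization throughout: a complex $X^\bullet$ belongs to $\D^b(\mod R)_{fgp}$ if and only if it admits a quasi-isomorphism $P^\bullet \to X^\bullet$ with $P^\bullet \in \K^{-,b}(\proj R)$ and $Z^i(P^\bullet) \in \Gproj R$ for all sufficiently small $i$. Closure under shifts is then immediate from the identity $Z^i(P^\bullet[n]) = Z^{i+n}(P^\bullet)$, and closure under direct summands follows from the essential uniqueness of projective resolutions: a splitting $X^\bullet \cong Y^\bullet \oplus Z^\bullet$ lifts to a splitting $P^\bullet \simeq P_Y^\bullet \oplus P_Z^\bullet$ in $\K^{-,b}(\proj R)$, so the cycles of $P_Y^\bullet$ and $P_Z^\bullet$ appear as direct summands of the eventually Gorenstein projective cycles of $P^\bullet$, and $\Gproj R$ is closed under direct summands.

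To verify closure under completing triangles, given a distinguished triangle $X^\bullet \to Y^\bullet \to Z^\bullet \to X^\bullet[1]$ in $\D^b(\mod R)$ I would realize it, up to isomorphism, via projective resolutions: lift the morphism $X^\bullet \to Y^\bullet$ to a chain map $f\colon P_X^\bullet \to P_Y^\bullet$ in $\K^{-,b}(\proj R)$, so that $Z^\bullet$ becomes quasi-isomorphic to $\Con(f)$ and the triangle fits into a degreewise split short exact sequence $0 \to P_Y^\bullet \to \Con(f) \to P_X^\bullet[1] \to 0$. For $i$ small enough that the three homologies vanish, this yields a short exact sequence of cycles $0 \to Z^i(P_Y^\bullet) \to Z^i(\Con(f)) \to Z^{i+1}(P_X^\bullet) \to 0$. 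I then split into the three cases of which two vertices are assumed to lie in $\D^b(\mod R)_{fgp}$: if the cycles of $P_X^\bullet$ and $P_Y^\bullet$ are eventually in $\Gproj R$, those of $\Con(f)$ are too by closure of $\Gproj R$ under extensions; if the cycles of $P_Y^\bullet$ and $\Con(f)$ are Gorenstein projective, then closure of $\Gproj R$ under kernels of epimorphisms \cite[Theorem~2.5]{Ho} gives the result for $P_X^\bullet$; the remaining case is the only mild obstacle, since cokernels of monomorphisms between Gorenstein projectives need not themselves be Gorenstein projective, but here one first concludes $\Gpd Z^{i+1}(P_X^\bullet) \leq 1$ and then descends one further syzygy to obtain $Z^i(P_X^\bullet) \in \Gproj R$ for all sufficiently small $i$. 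This completes the proof of thickness.

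For the identification $\D^b(\mod R)_{fgp} = \langle \Gproj R\rangle$, the inclusion $\langle \Gproj R \rangle \subseteq \D^b(\mod R)_{fgp}$ is immediate: every Gorenstein projective module viewed as a stalk complex trivially lies in $\D^b(\mod R)_{fgp}$, and the right-hand side is a thick triangulated subcategory by the first part. For the converse, I take $X^\bullet \in \D^b(\mod R)_{fgp}$ and use Definition~\ref{df:2.4} to replace it by an isomorphic bounded complex $G^\bullet$ of Gorenstein projectives. I then induct on the length $l(G^\bullet)$: when $l(G^\bullet) = 1$ the complex is the shift of a Gorenstein projective and plainly lies in $\langle\Gproj R\rangle$; otherwise, writing $a$ for the lowest nonzero degree, the brutal truncation short exact sequence $0 \to G^\bullet_{\geqslant a+1} \to G^\bullet \to G^a[-a] \to 0$ yields a distinguished triangle in $\D^b(\mod R)$ whose outer two terms lie in $\langle\Gproj R\rangle$ by the induction hypothesis, so $G^\bullet$ does as well.
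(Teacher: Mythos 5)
Your argument is correct in substance, and for two of the three ingredients it coincides with the paper's proof: closure under direct summands is obtained, as in the paper, by summing projective resolutions of the two summands and applying Lemma \ref{lem:2.5} together with the closure of $\Gproj R$ under direct summands, and the equality $\D^b(\mod R)_{fgp}=\langle\Gproj R\rangle$ is proved by the same induction on length via brutal truncation. Where you genuinely diverge is the verification that $\D^b(\mod R)_{fgp}$ is a triangulated subcategory: the paper simply cites \cite[Proposition 3.2]{ZH}, whereas you give a self-contained argument by realizing a triangle through complexes in $\K^{-,b}(\proj R)$ and reading off, for $i\ll0$, the exact sequence of cycles $0\to Z^{i}(P_Y^\bullet)\to Z^{i}(\Con(f))\to Z^{i+1}(P_X^\bullet)\to0$ coming from the degreewise split sequence $0\to P_Y^\bullet\to\Con(f)\to P_X^\bullet[1]\to0$. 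This is a perfectly sound and arguably preferable substitute, since it keeps the proof internal to the paper. Two minor remarks on that part. First, only one of your three cases is actually needed: a full subcategory closed under isomorphisms and shifts is triangulated as soon as it is closed under cones, the remaining cases following by rotation. Second, you have interchanged the justifications of your last two cases: in the displayed sequence of cycles the term controlled by closure of $\Gproj R$ under kernels of epimorphisms is $Z^{i}(P_Y^\bullet)$, so that argument settles the case where $P_X^\bullet$ and $\Con(f)$ have eventually Gorenstein projective cycles and one seeks the conclusion for $P_Y^\bullet$; conversely, when $P_Y^\bullet$ and $\Con(f)$ are the known vertices, $Z^{i+1}(P_X^\bullet)$ appears as the cokernel of a monomorphism between Gorenstein projectives, and it is precisely there that one needs the ``$\Gpd\le 1$ plus one further syzygy'' descent you describe. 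Both ingredients are present in your write-up, so this is a labelling slip rather than a gap.
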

\begin{proof}  It follows from \cite[Proposition 3.2]{ZH} that $\D^b(\mod R)_{fgp}$ is a triangulated subcategory of $\D^b(\mod R)$.  To get the first assertion, it suffices to show $\D^b(\mod R)_{fgp}$ is closed under direct
summands.  In fact, let $X_1^\bullet\oplus X_2^\bullet\in\D^b(\mod R)_{fgp}$ with $X_1^\bullet, X_2^\bullet\in\D^b(\mod R)$. Choose quasi-isomorphisms $P_1^\bullet\to X_1^\bullet$ and $P_2^\bullet\to X_2^\bullet$ with $P_1^\bullet, P_2^\bullet\in\K^{-,b}({\proj R})$. It follows that $P_1^\bullet\oplus P_2^\bullet\to X_1^\bullet\oplus X_2^\bullet$ is a quasi-isomorphism. Notice that $X_1^\bullet\oplus X_2^\bullet\in\D^b(\mod R)_{fgp}$ and $P_1^\bullet\oplus P_2^\bullet\in\K^{-,b}({\proj R})$, it follows from Lemma \ref{lem:2.5} that $Z^{i}(P_1^\bullet\oplus P_2^\bullet)\in\Gproj R$ for $i\ll0$. Since $Z^{i}(P_1^\bullet\oplus P_2^\bullet)\cong Z^{i}(P_1^\bullet)\oplus Z^{i}(P_2^\bullet)$, we get $Z^{i}(P_1^\bullet), Z^{i}(P_2^\bullet)\in\Gproj R$ for $i\ll0$. Then by Lemma \ref{lem:2.5}, we get $X_1^\bullet, X_2^\bullet\in\D^b(\mod R)_{fgp}$.

Note that every Gorenstein projective module viewed as a stalk complex concentrated in degree zero has finite Gorenstein projective dimension. Thus $\Gproj R\subseteq\D^b(\mod R)_{fgp}$
and then $\langle \Gproj R\rangle\subseteq\D^b(\mod R)_{fgp}$. On the other hand, let $G^\bullet$ be a bounded complex consisting of Gorenstein projective $R$-modules. We will show $G^\bullet\in\langle\Gproj R\rangle$ to complete the proof. We proceed by induction on the length $l(G^\bullet)$ of $G^\bullet$. If $l(G^\bullet)=1$, it is trivial to verify $G^\bullet\in\langle \Gproj R\rangle$. Now let $l(G^\bullet)=n\geq2$. We may suppose $G^m\neq0$ and $G^i=0$ for $i<m$. The we have the following triangle in $\D^b(\mod R)$:
$$G^m[-m-1]\to G^\bullet_{\geq{m+1}}\to G^\bullet\to G^m[-m].$$
By the induction hypothesis, we have that both $G^m[-m-1]$ and $G^\bullet_{\geq{m+1}}$ lie in $\langle \Gproj R\rangle$. Therefore $G^\bullet\in\langle \Gproj R\rangle$.
\end{proof}

\begin{prop}\label{prop:A.4} Let $X^\bullet\in\D^b(\mod R)$. If each $X^i$ is of finite Gorenstein projective dimension as an $R$-module, then $X^\bullet\in\D^b(\mod R)_{fgp}$. Furthermore, $\D^b(\mod R)_{fgp}=\D^b(\mod R)$ if and only if $R$ is Gorenstein.
\end{prop}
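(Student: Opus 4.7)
The plan is to handle the two assertions separately, leveraging the machinery just established in Theorem \ref{thm:A.3} and Remark \ref{rem:2.7}.

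For the first assertion, I would induct on the length $l(X^\bullet)$, mimicking the closing induction in the proof of Theorem \ref{thm:A.3}. When $l(X^\bullet)\le 1$, either $X^\bullet\simeq 0$ or $X^\bullet\simeq X^m[-m]$ for a single nonzero component $X^m$ with $\Gpd_R X^m<\infty$; by Remark \ref{rem:2.7} together with the thickness (and hence shift-closure) of $\D^b(\mod R)_{fgp}$ established in Theorem \ref{thm:A.3}, this forces $X^\bullet\in\D^b(\mod R)_{fgp}$. For $l(X^\bullet)=n\ge 2$, let $m$ be the smallest index with $X^m\neq 0$ and use the brutal-truncation triangle
$$X^m[-m-1]\to X^\bullet_{\geqslant m+1}\to X^\bullet\to X^m[-m]$$
in $\D^b(\mod R)$, exactly as in the last paragraph of the proof of Theorem \ref{thm:A.3}. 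The outer term is a shifted stalk at a module of finite Gorenstein projective dimension, while the middle term inherits the hypothesis with length $n-1$; both therefore lie in $\D^b(\mod R)_{fgp}$ by induction, and triangulatedness places $X^\bullet$ there as well.

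For the second assertion, the ``if'' direction is essentially immediate: over a Gorenstein Artin algebra every finitely generated module has finite Gorenstein projective dimension (indeed, bounded by the self-injective dimension of $R$), so the first assertion applies to every bounded complex. Conversely, if $\D^b(\mod R)_{fgp}=\D^b(\mod R)$, then every $M\in\mod R$, viewed as a stalk complex concentrated in degree zero, satisfies $\Gpd_R M<\infty$ by Remark \ref{rem:2.7}; the Gorensteinness of $R$ then follows from the standard characterization that an Artin algebra is Gorenstein precisely when every finitely generated module has finite Gorenstein projective dimension.

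The main obstacle I expect is this converse half of the second assertion. The hypothesis a priori supplies only pointwise finiteness of $\Gpd_R M$, not a uniform bound, and upgrading this to Gorensteinness of $R$ requires either a finitistic-dimension style argument (using that $\mod R$ has only finitely many simple modules, combined with the behaviour of $\Gpd$ on short exact sequences, to extract a uniform bound and then finiteness of the injective dimension of ${}_R R$) or a direct appeal to the known Beligiannis--Reiten type characterization of Gorenstein Artin algebras via Gorenstein homological dimensions. Once this characterization is invoked, everything else is bookkeeping on top of Theorem \ref{thm:A.3} and Remark \ref{rem:2.7}.
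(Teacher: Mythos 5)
Your proposal is correct and follows essentially the same route as the paper: induction on $l(X^\bullet)$ via the brutal-truncation triangle for the first assertion, and the known characterization of Gorenstein Artin algebras as those over which every finitely generated module has finite Gorenstein projective dimension (Hoshino's theorem, cited as \cite{Hos} in the paper) for the second. The ``obstacle'' you flag in the converse is resolved in the paper exactly as you suggest, by a direct appeal to that characterization rather than any finitistic-dimension argument.
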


\begin{proof} We will proceed by induction on the length $l(X^\bullet)$ of $X^\bullet$. If $l(X^\bullet)=1$, it is trivial to verify $X^\bullet\in\D^b(\mod R)_{fgp}$. Now let $l(X^\bullet)=n\geq2$. We may suppose $X^m\neq0$ and $X^i=0$ for $i<m$. Then we have the following triangle in $\D^b(\mod R)$:
$$X^m[-m-1]\to X^\bullet_{\geq{m+1}}\to X^\bullet\to X^m[-m].$$
By the induction hypothesis, we have that both $X^m[-m-1]$ and $X^\bullet_{\geq{m+1}}$ lie in $\D^b(\mod R)_{fgp}$. Therefore $X^\bullet\in\D^b(\mod R)_{fgp}$.

Note that $R$ is Gorenstein if and only if every module in $\mod R$ has finite Gorenstein projective dimension by \cite[Theorem]{Hos}. Thus $\D^b(\mod R)_{fgp}=\D^b(\mod R)$ if and only if $R$ is Gorenstein.
\end{proof}

The following result seems clear (see e.g. \cite[Theorem 3.4]{ZH}), we provide a proof here.

\begin{thm}\label{thm:A.5} We have the following exact commutative diagram:
$$\xymatrix{0\ar[r]&\underline{\Gproj R}\ar[r]^F\ar[d]
&\D_{sg}(R)\ar[r]\ar@{=}[d]
& \D_{def}(R)\ar[d]\ar[r]&0\\
0\ar[r]&{\D^b(\mod R)_{fgp}/\K^b(\proj R)}\ar[r]
&\D_{sg}(R)\ar[r]
&{\D^b(\mod R)/\D^b(\mod R)_{fgp}}\ar[r]&0
}$$
with all vertical functors triangle-equivalences.
\end{thm}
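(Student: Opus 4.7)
The plan is to establish two identifications: that the bottom row is itself a Verdier localization sequence, and that the image of the Buchweitz functor $F$ in $\D_{sg}(R)$ coincides with $\D^b(\mod R)_{fgp}/\K^b(\proj R)$. Both rows will then be Verdier sequences with common middle term $\D_{sg}(R)$, and the diagram commutes by construction.

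First I would handle the bottom row. By Theorem \ref{thm:A.3} the subcategory $\D^b(\mod R)_{fgp}$ is thick in $\D^b(\mod R)$, and since every projective module is Gorenstein projective we have the chain of thick subcategories $\K^b(\proj R)\subseteq\D^b(\mod R)_{fgp}\subseteq\D^b(\mod R)$. The third-isomorphism theorem for Verdier quotients then produces an exact sequence of triangulated categories
$$0\to \D^b(\mod R)_{fgp}/\K^b(\proj R)\to \D^b(\mod R)/\K^b(\proj R)\to \D^b(\mod R)/\D^b(\mod R)_{fgp}\to 0,$$
and the middle term is by definition $\D_{sg}(R)$.

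Next I would identify $\Im F$ with $\D^b(\mod R)_{fgp}/\K^b(\proj R)$ inside $\D_{sg}(R)$. The inclusion $\Im F\subseteq \D^b(\mod R)_{fgp}/\K^b(\proj R)$ is immediate, since any $M\in\Gproj R$ regarded as a stalk complex lies in $\D^b(\mod R)_{fgp}$. For the reverse inclusion, given $X^\bullet\in\D^b(\mod R)_{fgp}$, Lemma \ref{lem:2.5} produces a quasi-isomorphism $P^\bullet\to X^\bullet$ with $P^\bullet\in\K^{-,b}(\proj R)$ and $Z^i(P^\bullet)\in\Gproj R$ for $i\ll 0$. Picking $t$ small enough and applying brutal truncation gives a quasi-isomorphic complex $G^\bullet$ with $Z^t(P^\bullet)\in\Gproj R$ in degree $t$ and projectives in all higher degrees. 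The triangle $Z^t(P^\bullet)[-t]\to G^\bullet\to G^\bullet_{\geq t+1}\to$ in $\D^b(\mod R)$, together with the fact that its third term lies in $\K^b(\proj R)$ and so vanishes in $\D_{sg}(R)$, shows that $X^\bullet\cong Z^t(P^\bullet)[-t]$ in the singularity category, hence lies in $\Im F$. This identification yields the right vertical equivalence automatically, since
$$\D_{def}(R)=\D_{sg}(R)/\Im F\;\cong\;(\D^b(\mod R)/\K^b(\proj R))/(\D^b(\mod R)_{fgp}/\K^b(\proj R))\;\cong\;\D^b(\mod R)/\D^b(\mod R)_{fgp}.$$

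For the left vertical equivalence, Buchweitz's theorem gives that $F$ is fully faithful into $\D_{sg}(R)$; since $F$ factors through the embedding $\D^b(\mod R)_{fgp}/\K^b(\proj R)\hookrightarrow \D_{sg}(R)$, the factored functor $\underline{\Gproj R}\to \D^b(\mod R)_{fgp}/\K^b(\proj R)$ is fully faithful as well, and essential surjectivity is exactly the truncation argument above. Commutativity of both squares is tautological because the middle vertical is the identity and all functors involved are obvious projections/inclusions. The one step that requires real work is the essential surjectivity of the left vertical, i.e.\ moving from an arbitrary bounded complex of finite Gorenstein projective dimension to a single shifted Gorenstein projective module in $\D_{sg}(R)$; all the content there is in the syzygy argument provided by Lemma \ref{lem:2.5}.
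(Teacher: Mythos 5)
Your proposal is correct and follows essentially the same route as the paper: both reduce the theorem to the identification $\Im F=\D^b(\mod R)_{fgp}/\K^b(\proj R)$ and establish the nontrivial inclusion via Lemma \ref{lem:2.5} together with the truncation triangle that kills the bounded projective part in $\D_{sg}(R)$. The only cosmetic difference is that you make explicit the third-isomorphism theorem for Verdier quotients applied to the chain $\K^b(\proj R)\subseteq\D^b(\mod R)_{fgp}\subseteq\D^b(\mod R)$, which the paper leaves implicit.
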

\begin{proof} In view of Buchweitz's Theorem, it suffices to show $\Im F=\D^b(\mod R)_{fgp}/\K^b(\proj R)$.

Note that every Gorenstein projective module viewed as a stalk complex concentrated in degree zero has finite Gorenstein projective dimension. Thus
$$\Im F\subseteq\D^b(\mod R)_{fgp}/\K^b(\proj R).$$
Now let $X^\bullet\in\D^b(\mod R)_{fgp}/\K^b(\proj R)$, it follows from Lemma \ref{lem:2.5} that there exists a quasi-isomorphism $P^\bullet\to X^\bullet$ with $P^\bullet\in\K^{-,b}({\proj R})$ such that $Z^{i}(P^\bullet)\in\Gproj R$ for $i\ll0$.
Hence $P^\bullet$ is isomorphic to the following complex in $\D^b(\mod R)$
$$G^\bullet:=0\to Z^{t}(P^\bullet)\to P^{t}\to\cdots\to P^{s-1}\to P^s\to0,$$
where $s$ is the supremum of index $i\in\mathbb{Z}$ such that $P^i\neq0$ and $t$ is the index such that $Z^{i}(P^\bullet)\in\Gproj R$ and $H^i(P^\bullet)=0$ for any $i\leq t$.
 Consider the following triangle in $\D_{sg}(\mod R)$:
$$Z^{t}(P^\bullet)[-t]\to P^\bullet_{\geq t}\to G^\bullet\to Z^{t}(P^\bullet)[-t+1].$$
Since $P^\bullet_{\geq t}\in\K^b(\proj R)$, $G^\bullet\cong Z^{t}(P^\bullet)[-t+1]$ and then $X^\bullet\cong Z^{t}(P^\bullet)[-t+1]$ in $\D_{sg}(\mod R)$.
Hence $X^\bullet[t-1]\cong Z^{t}(P^\bullet)$, that is, $X^\bullet[t-1]\in\Im F$. Since $\Im F$ is a triangulated subcategory, $X^\bullet\in\Im F$
and then $\D^b(\mod R)_{fgp}/\K^b(\proj R)\subseteq\Im F.$
\end{proof}

Thanks to Beligiannis \cite{Be1}, Bergh-J{\o}rgensen-Oppermann \cite{BJO}, Kong-Zhang \cite{KZ} and Zhu \cite{Z}, the converse of Buchweitz's Theorem also holds true. We obtain the following.

\begin{cor}\label{cor:A.6} The following are equivalent:
\begin{enumerate}
\item[(1)] $F:\underline{\Gproj R}\to\D_{sg}(R)$ is a triangle-equivalence.
\item[(2)] $\D_{def}(R)=0$.
\item[(3)] $R$ is Gorenstein.
\end{enumerate}
\end{cor}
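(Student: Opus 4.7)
The plan is to reduce all three equivalences to results already established in the section. The implication (3)$\Rightarrow$(1) is just the second assertion of Buchweitz's Theorem, so nothing new is needed there. The implication (1)$\Rightarrow$(2) is also immediate from the definition of $\D_{def}(R)$: if $F$ is a triangle-equivalence, then $\Im F = \D_{sg}(R)$, hence the Verdier quotient $\D_{def}(R) = \D_{sg}(R)/\Im F$ is trivial.

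The main content is (2)$\Rightarrow$(3), and I would deduce it by combining Theorem \ref{thm:A.5} with Proposition \ref{prop:A.4}. Concretely, Theorem \ref{thm:A.5} provides a triangle-equivalence
\[
\D_{def}(R)\;\simeq\;\D^b(\mod R)/\D^b(\mod R)_{fgp}.
\]
Therefore $\D_{def}(R)=0$ forces $\D^b(\mod R)=\D^b(\mod R)_{fgp}$, and the second part of Proposition \ref{prop:A.4} tells us that this equality holds precisely when $R$ is Gorenstein. This closes the cycle (1)$\Rightarrow$(2)$\Rightarrow$(3)$\Rightarrow$(1).

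I do not anticipate a genuine obstacle: each step is a direct invocation of a result already proved in the section. The only point that deserves a brief sentence of justification is that, in Theorem \ref{thm:A.5}, the right-hand vertical triangle-equivalence identifies $\D_{def}(R)$ with the quotient $\D^b(\mod R)/\D^b(\mod R)_{fgp}$, so that the vanishing of the former is equivalent to the vanishing of the latter, i.e.\ to the inclusion $\D^b(\mod R)_{fgp}\supseteq \D^b(\mod R)$ (the reverse inclusion being tautological). Once this is noted, the corollary follows in a few lines.
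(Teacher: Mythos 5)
Your proposal is correct and follows essentially the same route as the paper: the equivalence (1)$\Leftrightarrow$(2) is the trivial part, and (2)$\Leftrightarrow$(3) is obtained exactly as you describe, by combining Theorem \ref{thm:A.5} with the second assertion of Proposition \ref{prop:A.4}. (The only implicit point, which Theorem \ref{thm:A.3} already supplies, is that $\D^b(\mod R)_{fgp}$ is thick, so the vanishing of the Verdier quotient really is equivalent to the equality $\D^b(\mod R)_{fgp}=\D^b(\mod R)$.)
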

\begin{proof} (1)$\Leftrightarrow$(2) is trivial, and (2)$\Leftrightarrow$(3) follows from Proposition \ref{prop:A.4} and Theorem \ref{thm:A.5}.
\end{proof}
%
%

\section{Triangle-equivalence of Gorenstein defect categories induced by the Schur functor}\label{Gde}

In this section, let $R$ be an Artin algebra and e an idempotent of $R$. Recall from \cite[Chapter 6]{G} that the Schur functor associative to $e$ is defined to be
$$S_e=eR\otimes_R-:\mod R\to \mod eRe,$$
also it was called the restriction functor in \cite[I.6]{ASS}. Clearly, $S_e$ admits a fully faithful left adjoint $$T_e=Re\otimes_{eRe}-:\mod eRe\to\mod R$$
and a fully faithful right adjoint
$$L_e=\Hom_{eRe}(eR,-):\mod eRe\to\mod R.$$

Recall from \cite{P,PSS} that a {\it recollement} between abelian categories $\A$, $\B$ and $\C$ is a diagram
$$\xymatrix{\A\ar[r]^{i_*}&\B\ar@/^1pc/[l]^{i^!}\ar@/_1pc/[l]_{i^*}\ar[r]^{j^*} &\C\ar@/^1pc/[l]^{j_*}\ar@/_1pc/[l]_{j_!} ,}$$
satisfying the following conditions:
\begin{enumerate}
\item[(1)] $(i^*,i_*,i^!)$ and $(j_!,j^*,j_*)$ are adjoint triples.
\item[(2)] The functors $i_*$, $j_!$ and $j_*$ are fully faithful.
\item[(3)] $\Im i_*=\Ker j^*$.
\end{enumerate}

\vspace{0.2cm}

We need the following fact.

\begin{lem} \cite{P,PSS}\label{lem:3.1} We have the following recollement between module categories:
$$\xymatrix@=2cm{\mod R/ReR\ar[r]^{inc}& \mod R\ar@/^1pc/[l]^{\Hom_R(R/ReR,-)}\ar@/_1pc/[l]_{R/ReR\otimes_R-}\ar[r]^{S_e=eR\otimes_R-} & \mod eRe\ar@/^1pc/[l]^{L_e=\Hom_{eRe}(eR,-)}\ar@/_1pc/[l]_{T_e=Re\otimes_{eRe}-},}$$
where $inc: \mod R/ReR\to\mod R$ denotes the inclusion functor induced by the canonical ring homomorphism $R\to R/ReR$.
In the following, the image of the functor $inc:\mod R/ReR\to\mod R$ will be identified with $\mod R/ReR$ for simplicity.
\end{lem}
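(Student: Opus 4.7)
The plan is to verify the three defining conditions of a recollement by reducing each to a standard statement about the idempotent $e \in R$ and the bimodules $Re$ and $eR$. The triple $(R/ReR \otimes_R -,\ inc,\ \Hom_R(R/ReR,-))$ is just the usual change-of-rings adjoint triple along the surjective ring map $R \to R/ReR$. For the triple $(T_e, S_e, L_e)$, I would first record the natural isomorphism $S_e = eR \otimes_R - \cong \Hom_R(Re,-)$ given by evaluation at $e$ (since $eR \otimes_R M \cong eM$ and any $R$-linear $f \colon Re \to M$ is determined by $f(e) \in eM$). Under this identification, $(T_e, S_e) = (Re \otimes_{eRe} -,\ \Hom_R(Re,-))$ is a tensor-hom adjunction, and $(S_e, L_e) = (eR \otimes_R -,\ \Hom_{eRe}(eR,-))$ is another tensor-hom adjunction.

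Next I would verify fully faithfulness of $inc$, $T_e$, and $L_e$. The inclusion $inc$ is tautologically fully faithful. For $T_e$, the unit of $(T_e, S_e)$ at $N \in \mod eRe$ is the canonical isomorphism
$$S_e T_e N = eR \otimes_R (Re \otimes_{eRe} N) \cong eRe \otimes_{eRe} N \cong N.$$
For $L_e$, the counit of $(S_e, L_e)$ at $N$ factors as
$$S_e L_e N = eR \otimes_R \Hom_{eRe}(eR, N) \cong e \cdot \Hom_{eRe}(eR, N) \xrightarrow{f \mapsto f(e)} N,$$
which is bijective with explicit inverse $n \mapsto (x \mapsto (xe) \cdot n)$, using that $xe \in eRe$ and $en = n$ for $n \in N$.

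Finally, for $\Im(inc) = \Ker(S_e)$, given $M \in \mod R$ we have $S_e M \cong eM$, so $S_e M = 0$ iff $eM = 0$ iff $ReM = R \cdot eM = 0$ iff $ReR \cdot M = 0$ iff the $R$-action on $M$ factors through the quotient $R \to R/ReR$, which is precisely the condition $M \in \Im(inc)$. The argument presents no genuine obstacle; it is essentially bookkeeping with the bimodule structures on $Re$ and $eR$, and the only mildly technical step is the verification that evaluation at $e$ gives the isomorphism $e \cdot \Hom_{eRe}(eR, N) \cong N$ used to show $L_e$ is fully faithful.
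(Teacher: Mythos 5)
Your proposal is correct; note that the paper itself offers no proof of this lemma, citing it directly from \cite{P,PSS}, so there is nothing internal to compare against. Your direct verification --- the change-of-rings adjoint triple for the surjection $R\to R/ReR$, the identification $eR\otimes_R-\cong\Hom_R(Re,-)$ yielding the two tensor-hom adjunctions, the unit/counit computations via $eR\otimes_RRe\cong eRe$ and evaluation at $e$, and the observation that $eM=0$ iff $ReR\cdot M=0$ --- is exactly the standard argument given in those references and is complete.
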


The idempotent $e\in R$ is said to be {\it singularly-complete} (\cite{C1}) if the projective dimension of any $R/ReR$-module is finite as an $R$-module.
Chen has shown in \cite{C1} that if $e$ is singularly-complete and $\pd_{eRe}eR<\infty$, then $S_e$ induces a singular equivalence $\D_{sg}(R)\simeq\D_{sg}(eRe)$.
This result is generalized by Psaroudakis-Skartsaterhagen-Solberg \cite{PSS}, where the authors prove $S_e$ induces a singular equivalence $\D_{sg}(R)\simeq\D_{sg}(eRe)$ if and only if $e$ is singularly-complete and $\pd_{eRe}eR<\infty$. Inspired by these, we will consider when $S_e$ induces a triangle-equivalence of Gorenstein defect categories $\D_{def}(R)\simeq\D_{def}(eRe)$.
We call the idempotent $e\in R$ {\it Gorenstein singularly-complete} if the Gorenstein projective dimension of any $R/ReR$-module is finite as an $R$-module. Clearly, $e$ is Gorenstein singularly-complete provided that $e$ is singularly-complete.

Since $S_e$ is exact, it lifts to the $*$-bounded derived functors $\D^*(S_e):\D^*(\mod R)\to\D^*(\mod eRe)$ via $\D^*(S_e)(X^\bullet)=S_e(X^\bullet)$ for each complex $X^\bullet\in\D^*(\mod R)$, where $*\in\{blank,\ +,\ -, \ b\}$. Meanwhile, it is not hard to see $T_e$ preserves projectives. So $T_e$ lifts to a derived functor $\D^-(T_e):\D^-(\mod eRe)\to\D^-(\mod R)$, compare \cite[Proposition 2.1]{CPS2}.

\begin{lem}\label{lem:3.2} The following statements hold true:
\begin{enumerate}
\item[(1)] The $*$-bounded derived functor $\D^*(S_e):\D^*(\mod R)\to\D^*(\mod eRe)$ restricts to
$$\D^b(S_e)_{fgp}:\D^b(\mod R)_{fgp}\to \D^b(\mod eRe)_{fgp}$$ if and only if $\Gpd_{eRe}S_e(F)<\infty$ for any $F\in\Gproj R$.
\item[(2)] Assume that $\Gpd_{R}T_e(G)<\infty$ for any $G\in\Gproj eRe$. Then the derived functor $\D^-(T_e):\D^-(\mod eRe)\to\D^-(\mod R)$ restricts to $$\D^b(T_e)_{fgp}:\D^b(\mod eRe)_{fgp}\to \D^b(\mod R)_{fgp}$$ if and only if $\Tor_i^{eRe}(Re,G)=0$ for any $G\in\Gproj eRe$ and $i$ sufficiently large.
\end{enumerate}
\end{lem}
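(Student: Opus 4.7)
The plan is to combine $\D^b(\mod R)_{fgp}=\langle\Gproj R\rangle$ (Theorem \ref{thm:A.3}) with the syzygy criterion of Lemma \ref{lem:2.5} and with Proposition \ref{prop:A.4}, using throughout the fact from Remark \ref{rem:2.7} that a module $M$, viewed as a stalk complex in degree zero, belongs to $\D^b(\mod-)_{fgp}$ precisely when $\Gpd M<\infty$.

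For part (1), the forward direction is obtained by evaluating the restricted functor on the stalk $F[0]$ for $F\in\Gproj R$: the conclusion $S_e(F)\in\D^b(\mod eRe)_{fgp}$ forces $\Gpd_{eRe}S_e(F)<\infty$ by Remark \ref{rem:2.7}. Conversely, given $X^\bullet\in\D^b(\mod R)_{fgp}$, Lemma \ref{lem:2.5} allows me to replace $X^\bullet$ up to isomorphism by a bounded complex $G^\bullet$ of Gorenstein projectives; the bounded complex $S_e(G^\bullet)$ then has each component of finite Gorenstein projective dimension over $eRe$ by hypothesis, and Proposition \ref{prop:A.4} places it in $\D^b(\mod eRe)_{fgp}$.

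For part (2), the forward direction is analogous: for $G\in\Gproj eRe$ the stalk $G[0]$ lies in $\D^b(\mod eRe)_{fgp}$, so $\D^-(T_e)(G)$ must have bounded cohomology as a complex of $R$-modules. Since $H^{-i}(\D^-(T_e)(G))=\Tor_i^{eRe}(Re,G)$, boundedness is exactly the required Tor vanishing for $i\gg 0$.

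For the converse in part (2), thickness of $\D^b(\mod R)_{fgp}$ (Theorem \ref{thm:A.3}), the description $\D^b(\mod eRe)_{fgp}=\langle\Gproj eRe\rangle$, and the triangulated nature of $\D^-(T_e)$ reduce the task to showing $\D^-(T_e)(G)\in\D^b(\mod R)_{fgp}$ for a single $G\in\Gproj eRe$. I choose a projective resolution $P^\bullet\to G$ with $P^\bullet\in\K^{-,b}(\proj eRe)$; then $T_e(P^\bullet)\in\K^-(\proj R)$ represents $\D^-(T_e)(G)$, and $H^{-i}(T_e(P^\bullet))=\Tor_i^{eRe}(Re,G)=0$ for $i\gg 0$, so $T_e(P^\bullet)\in\K^{-,b}(\proj R)$. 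By Lemma \ref{lem:2.5} it then remains to verify that $Z^i(T_e(P^\bullet))\in\Gproj R$ for $i\ll 0$. For $n$ large the syzygy $K_n:=Z^{-n}(P^\bullet)$ lies in $\Gproj eRe$, and after enlarging $n$ further a dimension-shift argument combined with the Tor hypothesis yields $\Tor_i^{eRe}(Re,K_n)=0$ for every $i\geq 1$; hence the tail of $T_e(P^\bullet)$ below level $-n$ is an honest projective $R$-resolution of $T_e(K_n)$. The standing assumption $\Gpd_R T_e(K_n)<\infty$ then guarantees that sufficiently deep syzygies of $T_e(P^\bullet)$ land in $\Gproj R$. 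This last step—marrying the $eRe$-side Tor vanishing, which is what turns a tail of $T_e(P^\bullet)$ into a genuine resolution of a single $R$-module, with the $R$-side finiteness of $\Gpd$, which is what lets one push far enough to reach $\Gproj R$—is the main obstacle; everything else is a direct translation through Lemma \ref{lem:2.5}, Theorem \ref{thm:A.3}, Proposition \ref{prop:A.4}, and Remark \ref{rem:2.7}.
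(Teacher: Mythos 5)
Your proposal is correct and follows essentially the same route as the paper: both directions of (1) via Remark \ref{rem:2.7} and Proposition \ref{prop:A.4}, the ``only if'' of (2) by computing $H^{-i}(\D^-(T_e)(G))=\Tor_i^{eRe}(Re,G)$, and the ``if'' of (2) by reducing to a single $G\in\Gproj eRe$ (the paper inducts on the length of a bounded complex of Gorenstein projectives, you invoke $\D^b(\mod eRe)_{fgp}=\langle\Gproj eRe\rangle$ and thickness --- the same reduction in different clothing) and then analyzing the tail of $T_e(P^\bullet)$. Your explicit dimension-shift to a deep syzygy $K_n$ with $\Tor_i^{eRe}(Re,K_n)=0$ for all $i\geq 1$ is in fact a slightly more careful justification of the paper's terse claim that $Z^i(T_e(P^\bullet))\cong T_e(Z^i(P^\bullet))$ for $i\ll 0$.
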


\begin{proof} (1) Assume that $\D^*(S_e):\D^*(\mod R)\to\D^*(\mod eRe)$ restricts to
$\D^b(S_e)_{fgp}:\D^b(\mod R)_{fgp}\to \D^b(\mod eRe)_{fgp}$. For any $F\in\Gproj R$, we have that $\D^b(S_e)(F)=S_e(F)$ lies in $\D^b(\mod eRe)_{fgp}$. By Remark \ref{rem:2.7}, we get $\Gpd_{eRe}S_e(F)<\infty$. Conversely, assume that $\Gpd_{eRe}S_e(F)<\infty$ for any $F\in\Gproj R$. Let $X^\bullet$ be a bounded complex of Gorenstein projective $R$-modules. It follows that $\D^*(S_e)(X^\bullet)=S_e(X^\bullet)$, it is a bounded complex with each degree being of finite Gorenstein projective dimension. Hence $\D^*(S_e)(X^\bullet)\in\D^b(\mod eRe)_{fgp}$ by Proposition \ref{prop:A.4}. Thus $\D^*(S_e):\D^*(\mod R)\to\D^*(\mod eRe)$ restricts to
$\D^b(S_e)_{fgp}:\D^b(\mod R)_{fgp}\to \D^b(\mod eRe)_{fgp}$.

(2) For the ``if'' part, we will show $\D^-(T_e)(\D^b(\mod eRe)_{fgp})\subseteq\D^b(\mod R)_{fgp}$. To do this it suffices to show $\D^-(T_e)(Y^\bullet)\in\D^b(\mod R)_{fgp}$ for any bounded complex $Y^\bullet$ of Gorenstein projective $eRe$-modules. We proceed by induction on the length $l(Y^\bullet)$ of $Y^\bullet$.

If $l(Y^\bullet)=1$, we may suppose $Y^\bullet=Y$ be the stalk complex concentrated in degree 0.
Take a projective resolution $\cdots\to P^{-n}\to\cdots\to P^{-1}\to P^0\to Y\to0$ of $Y$, and denote by $P^\bullet=\cdots\to P^{-n}\to\cdots\to P^{-1}\to P^0\to0$. It follows that $\D^-(T_e)(Y)\cong T_e(P^\bullet)=Re\otimes_{eRe}P^\bullet$, it is a complex of projective $R$-modules. Since $Y\in\Gproj eRe$, one has each cycle $Z^i(P^\bullet)\in\Gproj eRe$. Then by assumption we obtain $\Gpd_{R}T_e(Z^i(P^\bullet))<\infty$ for every integer $i$. Note that $\Tor_i^{eRe}(Re,G)=0$ for any $G\in\Gproj eRe$ and $i$ sufficiently large. It follows that $T_e(P^\bullet)$ is exact in degree $i$ and hence $Z^i(T_e(P^\bullet))\cong T_e(Z^i(P^\bullet))$ whenever $i\ll0$. Thus there exists some integer $n_0\gg0$ such that $T_e(P^\bullet)$ is isomorphic to its truncation complex $G^\bullet:=0\to T_e(Z^{-n_0}(P^\bullet))\to T_e(P^{-n_0})\to\cdots\to T_e(P^{-1})\to T_e(P^0)\to0$. Note that $T_e(P^{i})\in\proj R$ for every integer $i$ and $\Gpd_{R}T_e(Z^{-n_0}(P^\bullet))<\infty$. Therefore, by Proposition \ref{prop:A.4} we have $\D^-(T_e)(Y)\in\D^b(\mod R)_{fgp}$.

Now suppose $l(Y^\bullet)=n\geq2$ and the claim holds true for any integer less than $n$. Then $Y^\bullet$ must be of the following form:
$$Y^\bullet=0\to Y^{m+1}\to Y^{m+2}\to\cdots\to Y^{m+n}\to0.$$
It induces a triangle $$Y^{m+1}[-m-2]\to Y^\bullet_{\geq m+2}\to Y^\bullet\to Y^{m+1}[-m-1]$$ in $\D^-(\mod eRe)$.  And then we have the following triangle in $\D^-(\mod R)$:
$$\D^-(T_e)(Y^{m+1})[-m-2]\to \D^-(T_e)(Y^\bullet_{\geq m+2})\to\D^-(T_e)(Y^\bullet)\to \D^-(T_e)(Y^{m+1})[-m-1].$$
By the induction hypothesis, we see that both $\D^-(T_e)(Y^{m+1})[-m-2]$ and $\D^-(T_e)(Y^\bullet_{\geq m+2})$ lie in $\D^b(\mod R)_{fgp}$. Thus $\D^-(T_e)(Y^\bullet)\in\D^b(\mod R)_{fgp}$.

For the ``only if'' part, assume $\D^-(T_e):\D^-(\mod eRe)\to\D^-(\mod R)$ restricts to $\D^b(T_e)_{fgp}:\D^b(\mod eRe)_{fgp}\to \D^b(\mod R)_{fgp}$. Let $G\in\Gproj eRe$. Take a projective resolution $P_G^\bullet\to G$ of $G$. It follows that $\D^-(T_e)(G)\cong T_e(P_G^\bullet)$, this should be a complex in $\D^b(\mod R)_{fgp}$. Thus $T_e(P_G^\bullet)$ has finite cohomology and then $\Tor_i^{eRe}(Re,G)=0$ for $i$ sufficiently large.
\end{proof}

Denote by $\D^b(\mod R)_{\mod R/ReR}$ the subcategory of $\D^b(\mod R)$ consisting of complexes with cohomology in $\mod R/ReR$. It is not hard to see $\D^b(\mod R)_{\mod R/ReR}$ is a thick subcategory of $\D^b(\mod R)$ generated by $\mod R/ReR$.
Inspired by \cite[Theorem 5.2]{PSS}, we get the following main result of this paper.

\begin{thm}\label{thm:1.1} Let $R$ be an Artin algebra and $e$ an idempotent of $R$. Assume that $\Tor_i^{eRe}(Re,G)=0$ for any $G\in\Gproj eRe$ and $i$ sufficiently large. Then the Schur functor $S_e$ induces a triangle-equivalence of Gorenstein defect categories $\D_{def}(R)\simeq\D_{def}(eRe)$ if and only if the following conditions are satisfied:
\begin{enumerate}
\item[(C1)] $\Gpd_{eRe}S_e(F)<\infty$ for any $F\in\Gproj R$.
\item[(C2)] $\Gpd_{R}T_e(G)<\infty$ for any $G\in\Gproj eRe$.
\item[(C3)] The idempotent $e$ is Gorenstein singularly-complete.
\end{enumerate}
\end{thm}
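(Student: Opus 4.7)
The plan is to use Theorem \ref{thm:A.5} to identify $\D_{def}(R) \cong \D^b(\mod R)/\D^b(\mod R)_{fgp}$ and $\D_{def}(eRe) \cong \D^b(\mod eRe)/\D^b(\mod eRe)_{fgp}$, and then to study when the exact triangle functor $\D^b(S_e)$ descends to an equivalence $\overline{S_e}$ between these Verdier quotients. Lemma \ref{lem:3.2}(1) tells us that condition (C1) is exactly what makes $\overline{S_e}$ well-defined, so it is necessary in both directions.

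For the necessity direction I will assume $\overline{S_e}$ is a triangle-equivalence. Given $G \in \Gproj eRe$, $G$ is zero in $\D_{def}(eRe)$; combined with the recollement identity $S_e T_e \cong \id_{\mod eRe}$ of Lemma \ref{lem:3.1}, this yields $\overline{S_e}(T_e(G)) \cong 0$, and the equivalence forces $T_e(G) \cong 0$ in $\D_{def}(R)$, so Remark \ref{rem:2.7} gives $\Gpd_R T_e(G) < \infty$; this is (C2). For (C3), given $N \in \mod R/ReR$, the element $e = 1\cdot e\cdot 1 \in ReR$ annihilates $N$, so $S_e(N) \cong eN = 0$; thus $\overline{S_e}(N) = 0$ and the equivalence yields $\Gpd_R N < \infty$.

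For the sufficiency direction, with (C1) providing the well-defined triangle functor $\overline{S_e}$, I will argue in two stages. First, essential surjectivity: the module-level identity $S_e T_e \cong \id$ already shows every $eRe$-module lies in the image of $S_e$, and since $\mod eRe$ generates $\D^b(\mod eRe)$ as a triangulated category, the essential image of $\overline{S_e}$ exhausts $\D_{def}(eRe)$. Second, full faithfulness: by (C3) together with Proposition \ref{prop:A.4}, every object of $\mod R/ReR$ has finite Gorenstein projective dimension, so the module-level counit $T_e S_e(M) \to M$, whose kernel and cokernel lie in $\mod R/ReR$ by the recollement, becomes an isomorphism in $\D_{def}(R)$. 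Using the derived adjunction $(\D^-(T_e), \D^+(S_e))$, one compares Hom-sets in the Verdier quotients; the Tor hypothesis together with (C2), via Lemma \ref{lem:3.2}(2), supplies the boundedness of cohomology needed for $\D^-(T_e)$ to descend meaningfully to $\D_{def}$.

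The hard part will be the rigorous Hom-comparison $\Hom_{\D_{def}(R)}(X, Y) \cong \Hom_{\D_{def}(eRe)}(\overline{S_e} X, \overline{S_e} Y)$ inside the Verdier quotients, because $T_e$ is only right exact and $\D^-(T_e)$ does not a priori land in $\D^b$ on an arbitrary bounded complex of $eRe$-modules. The device for overcoming this is the combination of the Tor vanishing on $\Gproj eRe$ and condition (C2), which allows one to truncate a projective resolution deeply enough so that the remaining initial part sits in $\D^b(\mod R)_{fgp}$ — the same truncation-by-induction argument that drives Lemma \ref{lem:3.2}(2). Once this derived adjunction is in place, the iso-ness of the counit modulo $\D^b(\mod R)_{fgp}$ supplied by (C3) closes the argument.
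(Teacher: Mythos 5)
Your necessity argument is essentially the paper's: Gorenstein projectives, $T_e(G)$ for $G\in\Gproj eRe$, and $R/ReR$-modules all become zero objects after applying $S_e$ and passing to $\D_{def}(eRe)$, so an equivalence forces them to be zero in $\D_{def}(R)$, which by Remark \ref{rem:2.7} is exactly (C1)--(C3). That half is fine. The sufficiency half, however, has a genuine gap, and it sits precisely where you flagged ``the hard part.'' Your plan is to compare Hom-sets in the Verdier quotients via the derived adjunction $(\D^-(T_e),\D^-(S_e))$. But $\mathbf{L}T_e=\D^-(T_e)$ does not descend to $\D_{def}(eRe)=\D^b(\mod eRe)/\D^b(\mod eRe)_{fgp}$: the Tor-vanishing hypothesis is assumed only for $G\in\Gproj eRe$, so for an arbitrary bounded complex $Y^\bullet$ of $eRe$-modules, $\mathbf{L}T_e(Y^\bullet)$ need not have bounded cohomology at all, and the cone of the derived counit $\mathbf{L}T_eS_e X^\bullet\to X^\bullet$ lives only in $\D^-_{\mod R/ReR}(\mod R)$, not in $\D^b$. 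Consequently, when you try to lift a fraction $S_eX_1\leftarrow Y'\to S_eX_2$ (with cone of the wrong-way map in $\D^b(\mod eRe)_{fgp}$) by applying $\mathbf{L}T_e$, the resulting roof lives in $\D^-(\mod R)$ with no way to land it back in $\D^b(\mod R)/\D^b(\mod R)_{fgp}$. The truncation device of Lemma \ref{lem:3.2}(2) only applies to complexes of \emph{Gorenstein projective} $eRe$-modules, i.e.\ to objects of $\D^b(\mod eRe)_{fgp}$, not to arbitrary objects of $\D^b(\mod eRe)$, so it does not repair this. (A smaller fixable slip: your essential-surjectivity argument ``the image contains $\mod eRe$, which generates'' is not valid as stated, since the essential image of a triangle functor that is not yet known to be full need not be triangulated; but this is harmless because $S_eT_e(Y^\bullet)\cong Y^\bullet$ for every complex $Y^\bullet$, so $\D^b(S_e)$ is essentially surjective outright.)

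The paper avoids this problem by a different decomposition. It first invokes Miyachi's theorem on the exact sequence of abelian categories $0\to\mod R/ReR\to\mod R\to\mod eRe\to 0$ from Lemma \ref{lem:3.1}, obtaining an honest triangle-equivalence $\overline{\D^b(S_e)}:\D^b(\mod R)/\D^b(\mod R)_{\mod R/ReR}\to\D^b(\mod eRe)$; this is where the bounded-level fraction calculus you are missing gets done. Condition (C3) gives $\D^b(\mod R)_{\mod R/ReR}\subseteq\D^b(\mod R)_{fgp}$, so $\D_{def}(R)$ is the further quotient of the source by $\D^b(\mod R)_{fgp}/\D^b(\mod R)_{\mod R/ReR}$, and (C1) ensures $\overline{\D^b(S_e)}$ carries this subcategory into $\D^b(\mod eRe)_{fgp}$. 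Full faithfulness of the restricted functor is then inherited for free from the Miyachi equivalence, and the adjunction with $T_e$ is used only \emph{on the fgp subcategories} --- where Lemma \ref{lem:3.2}(2), using (C2) and the Tor hypothesis, guarantees $\D^b(T_e)_{fgp}$ is a genuine fully faithful left adjoint --- solely to prove that the restricted functor is dense. The equivalence on defect categories then follows from the standard comparison of the two quotient sequences. To complete your proof you would need either to import Miyachi's theorem (or an equivalent dévissage for the Serre quotient $\mod eRe\simeq(\mod R)/(\mod R/ReR)$) for the fullness step, or to supply a direct fraction-calculus argument at the bounded level, which the derived adjunction alone does not provide.
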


\begin{proof} For the ``if'' part. Since $0\to\mod R/ReR\to\mod R\to\mod eRe\to0$ is an exact sequence of module categories by Lemma \ref{lem:3.1}, it follows from \cite[Theorem 3.2]{M} that $\D^b(S_e):\D^b(\mod R)\to \D^b(\mod eRe)$ induces a triangle-equivalence $$\overline{\D^b(S_e)}:\D^b(\mod R)/\D^b(\mod R)_{\mod R/ReR}\to \D^b(\mod eRe).$$ Notice that $e$ is Gorenstein singularly-complete, thus any $R/ReR$-module has finite Gorenstein projective dimension as an $R$-module. As $\D^b(\mod R)_{\mod R/ReR}$ is generated by $\mod R/ReR$, one has $\D^b(\mod R)_{\mod R/ReR}\subseteq\D^b(\mod R)_{fgp}$. Then by Theorem \ref{thm:A.5}, we obtain
$$\D_{def}(R)\simeq(\D^b(\mod R)/\D^b(\mod R)_{\mod R/ReR})/(\D^b(\mod R)_{fgp}/\D^b(\mod R)_{\mod R/ReR}).$$

Since $\Gpd_{eRe}S_e(F)<\infty$ for any $F\in\Gproj R$, by Lemma \ref{lem:3.2} we have $\D^b(S_e)(\D^b(\mod R)_{fgp})\subseteq\D^b(\mod eRe)_{fgp}$. Therefore, $\overline{\D^b(S_e)}(\D^b(\mod R)_{fgp}/\D^b(\mod R)_{\mod R/ReR})\subseteq\D^b(\mod eRe)_{fgp}$.  This implies the following exact commutative diagram:
$$\xymatrix@C=3ex{0\ar[r]&{\D^b(\mod R)_{fgp}/\D^b(\mod R)}_{\mod R/ReR}\ar[r]\ar[d]^{\overline{\D^b(S_e)}_{fgp}}
&{\D^b(\mod R)/\D^b(\mod R)}_{\mod R/ReR}\ar[r]\ar[d]_{\simeq}^{\overline{\D^b(S_e)}}
& \D_{def}(R)\ar[d]^{\D_{def}(S_e)}\ar[r]&0\\
0\ar[r]&{\D^b(\mod eRe)_{fgp}}\ar[r]
&{\D^b(\mod eRe)}\ar[r]
&{\D_{def}(eRe)}\ar[r]&0
,}$$
where $\overline{\D^b(S_e)}_{fgp}$ is the restriction of $\overline{\D^b(S_e)}$. Since $\overline{\D^b(S_e)}$ is fully faithful, so is $\overline{\D^b(S_e)}_{fgp}$.

Next, we will show $\overline{\D^b(S_e)}_{fgp}:\D^b(\mod R)_{fgp}/\D^b(\mod R)_{\mod R/ReR}\to\D^b(\mod eRe)_{fgp}$ is essentially surjective (or dense). By \cite[Proposition 2.1]{CPS2} we have the following diagram:
$$\xymatrix{\D^-(\mod R)\ar@/_1pc/[r]_{\D^-(S_e)} &\D^-(\mod eRe)\ar@/_1pc/[l]_{\D^-(T_e)}, }$$
such that $(\D^-(T_e),\D^-(S_e))$ is an adjoint pair with $\D^-(T_e)$ fully faithful. In view of Lemma \ref{lem:3.2}, this diagram restricts to the following diagram:
$$\xymatrix{\D^b(\mod R)_{fgp}\ar@/_1pc/[r]_{\D^b(S_e)_{fgp}} &\D^b(\mod eRe)_{fgp}\ar@/_1pc/[l]_{\D^b(T_e)_{fgp}}, }$$
such that $(\D^b(T_e)_{fgp},\D^b(S_e)_{fgp})$ is an adjoint pair with $\D^b(T_e)_{fgp}$ fully faithful. Then for any $Y^\bullet\in\D^b(\mod eRe)_{fgp}$, denote by $X^\bullet=\D^b(T_e)_{fgp}(Y^\bullet)$. It follows that $Y^\bullet\cong\D^b(S_e)_{fgp}\D^b(T_e)_{fgp}(Y^\bullet)\cong\D^b(S_e)_{fgp}(X^\bullet)$. Then $\D^b(S_e)_{fgp}:\D^b(\mod R)_{fgp}\to \D^b(\mod eRe)_{fgp}$ is dense and hence $\overline{\D^b(S_e)}_{fgp}:\D^b(\mod R)_{fgp}/\D^b(\mod R)_{\mod R/ReR}\to\D^b(\mod eRe)_{fgp}$ is dense. To conclude, $\overline{\D^b(S_e)}_{fgp}$ is a triangle-equivalence. Therefore, we infer that $\D_{def}(S_e):\D_{def}(R)\to\D_{def}(eRe)$ is a triangle-equivalence from the above exact commutative diagram.

Conversely, let $F\in\Gproj R$. It follows from Proposition \ref{prop:A.4} that $F$ is zero in $\D_{def}(R)$. Since $\D_{def}(S_e):\D_{def}(R)\to\D_{def}(eRe)$ is a triangle-equivalence, we get $\D_{def}(S_e)(F)=S_e(F)$ is zero in $\D_{def}(\mod eRe)$ and hence $S_e(F)\in\D^b(\mod eRe)_{fgp}$. Therefore from Remark \ref{rem:2.7} we have $\Gpd_{eRe}S_e(F)<\infty$ and (C1) follows. To get (C2) and (C3), for any $G\in\Gproj eRe$ and $M\in\mod R/ReR$, we will show both $M$ and $T_e(G)$ have finite Gorenstein projective dimension as $R$-modules. Following Lemma \ref{lem:3.1}, we obtain $S_e(M)=0$ and $G\cong S_e T_e(G)$ and hence both $S_e(M)$ and $S_e T_e(G)$ are zero in $\D_{def}(eRe)$. Notice that $\D_{def}(S_e):\D_{def}(R)\to\D_{def}(eRe)$ is an equivalence, we obtain that  both $M$ and $T_e(G)$ are zero in $\D_{def}(R)$.
Therefore by the foregoing proof, we conclude that both $M$ and $T_e(G)$ have finite Gorenstein projective dimension as $R$-modules as desired.
\end{proof}

\begin{cor}\label{cor:3.3} Let $R$ be an Artin algebra and $e$ an idempotent of $R$. Assume that $\Tor_i^{eRe}(Re,G)=0$ for any $G\in\Gproj eRe$ and $i$ sufficiently large. Then the Schur functor $S_e$ induces the following exact commutative diagram:
$$\xymatrix{0\ar[r]&\underline{\Gproj R}\ar[r]\ar[d]
&\D_{sg}(R)\ar[r]\ar[d]
& \D_{def}(R)\ar[d]\ar[r]&0\\
0\ar[r]&\underline{\Gproj eRe}\ar[r]
&\D_{sg}(eRe)\ar[r]
&\D_{def}(eRe)\ar[r]&0
}$$
with all vertical functors triangle-equivalences if and only if the following conditions are satisfied:
\begin{enumerate}
\item[(C1)] $\Gpd_{eRe}S_e(F)<\infty$ for any $F\in\Gproj R$.
\item[(C2)] $\Gpd_{R}T_e(G)<\infty$ for any $G\in\Gproj eRe$.
\item[(C3)] The idempotent $e$ is singularly-complete and $\pd_{eRe}eR<\infty$.
\end{enumerate}
\end{cor}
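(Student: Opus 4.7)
The plan is to combine Theorem~\ref{thm:1.1} with \cite[Corollary~5.4]{PSS} and glue the two resulting triangle-equivalences along the exact sequence of triangulated categories from Theorem~\ref{thm:A.5}. The three columns of the diagram we must produce correspond respectively to the stable category of Gorenstein projectives, the singularity category, and the Gorenstein defect category, while both rows are instances of that exact sequence.

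For the ``if'' direction, assume (C1)--(C3). By \cite[Corollary~5.4]{PSS}, condition (C3) yields a triangle-equivalence $\D_{sg}(R)\simeq\D_{sg}(eRe)$ induced by $S_e$, which is the middle vertical arrow. Since being singularly-complete implies being Gorenstein singularly-complete, all hypotheses of Theorem~\ref{thm:1.1} hold, producing the rightmost vertical triangle-equivalence $\D_{def}(R)\simeq\D_{def}(eRe)$. Both functors are induced by the same Schur functor $S_e$, hence commute with the Verdier quotients $\D_{sg}\to\D_{def}$ on either side, so the right square commutes up to natural isomorphism. The leftmost vertical arrow is then induced on kernels; under the identifications $\underline{\Gproj R}\simeq\D^b(\mod R)_{fgp}/\K^b(\proj R)$ and $\underline{\Gproj eRe}\simeq\D^b(\mod eRe)_{fgp}/\K^b(\proj eRe)$ from Theorem~\ref{thm:A.5}, it arises from the restriction $\D^b(S_e)_{fgp}$ of Lemma~\ref{lem:3.2}(1) (available by (C1)), provided one also checks that $\D^b(S_e)$ sends $\K^b(\proj R)$ into the essential image of $\K^b(\proj eRe)$. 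This last point follows from $\pd_{eRe}eR<\infty$ in (C3): $S_e(R)=eR$ has finite $eRe$-projective dimension, so any bounded complex of projective $R$-modules maps to a complex quasi-isomorphic to one in $\K^b(\proj eRe)$.

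For the ``only if'' direction, the assumed diagram gives in particular a triangle-equivalence $\D_{sg}(R)\simeq\D_{sg}(eRe)$ induced by $S_e$ (middle column), so \cite[Corollary~5.4]{PSS} forces (C3). The right column provides $\D_{def}(R)\simeq\D_{def}(eRe)$ induced by $S_e$, so Theorem~\ref{thm:1.1} forces (C1) and (C2); the Gorenstein singularly-complete condition appearing in Theorem~\ref{thm:1.1} is already subsumed by (C3).

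The main obstacle is the leftmost column in the ``if'' direction: one must check both that $S_e$ descends to a well-defined functor $\underline{\Gproj R}\to\underline{\Gproj eRe}$ and that this functor is a triangle-equivalence. Well-definedness uses (C1) via Lemma~\ref{lem:3.2}(1) together with the $\pd_{eRe}eR<\infty$ input from (C3) to handle the $\K^b(\proj)$ parts, as described above. The equivalence property is then forced by the two already-established equivalences in the middle and right columns via standard diagram-chasing, since fully faithfulness and essential surjectivity transfer to kernels along a morphism of exact sequences of triangulated categories.
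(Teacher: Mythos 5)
Your proposal is correct and follows essentially the same route as the paper: deduce (C3) from the middle column via \cite[Corollary 5.4]{PSS} and (C1)--(C2) from the right column via Theorem \ref{thm:1.1} for the ``only if'' direction, and for the ``if'' direction assemble the two equivalences into the morphism of exact sequences and transfer the equivalence to the left column. Your extra verifications (that $S_e$ sends $\K^b(\proj R)$ into complexes of finite projective dimension using $\pd_{eRe}eR<\infty$, and the kernel-level diagram chase, which works because $\Im F$ is thick) simply fill in the step the paper dismisses as ``not hard to see.''
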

\begin{proof} The ``only if'' part follows directly from Theorem \ref{thm:1.1} and \cite[Corollary 5.4]{PSS}, we will prove the ``if'' part. Combine \cite[Corollary 5.4]{PSS} with Theorem \ref{thm:1.1}, the Schur functor induces a singular equivalence $\D_{sg}(S_e):\D_{sg}(R)\to\D_{sg}(eRe)$ and a triangle-equivalence of Gorenstein defect categories $\D_{def}(S_e):\D_{def}(R)\to\D_{def}(eRe)$. Then we have the following exact commutative diagram:
$$\xymatrix{0\ar[r]&\underline{\Gproj R}\ar[r]
&\D_{sg}(R)\ar[r]\ar[d]^{\D_{sg}(S_e)}
& \D_{def}(R)\ar[d]^{\D_{def}(S_e)}\ar[r]&0\\
0\ar[r]&\underline{\Gproj eRe}\ar[r]
&\D_{sg}(eRe)\ar[r]
&\D_{def}(eRe)\ar[r]&0
.}$$
Hence it is not hard to see that $\D_{sg}(S_e):\D_{sg}(R)\to\D_{sg}(eRe)$ restricts to a triangle-equivalence $\underline{\Gproj R}\simeq\underline{\Gproj eRe}$ and then we get the desired result.
\end{proof}

\section{Applications in triangular matrix algebras}\label{app}

In this section, we will deal with the triangular matrix algebra $T=\left(
                                                                                 \begin{array}{cc}
                                                                                   A & M \\
                                                                                   0 & B \\
                                                                                 \end{array}
                                                                               \right)$, where the corner algebras $A$ and $B$ are Artin algebras and ${_A}M_B$ is an $A$-$B$-bimodule.

Recall that a left $T$-module is identified with a triple $(X, Y, \phi)$, where $X\in \mod A$, $Y\in \mod B$ and $\phi:M\otimes_B Y\to X$ ia an $A$-morphism. If there is no possible confusion, we shall omit the morphism $\phi$ and write $(X, Y)$ for short. For example we write $(M\otimes_SY, Y)$ for the $T$-module $(M\otimes_SY, Y, id)$.
A $T$-morphism
$(X, Y, \phi)\to(X', Y', \phi')$
will be identified with a pair
$(f, g)$,
where $f\in\Hom_A(X,X')$ and $g\in\Hom_B(Y,Y')$, such that the following diagram
$$\xymatrix{M\otimes_B Y\ar[r]^{\ \ \phi}\ar[d]_{1\otimes g} & X\ar[d]_f\\
M\otimes_B Y'\ar[r]^{\ \ \phi'} &X'
}$$ is commutative.

A sequence $0\to(X_1, Y_1, \phi_1)\xrightarrow{(f_1, g_1)}(X_2, Y_2, \phi_2)\xrightarrow{(f_2, g_2)}(X_3, Y_3, \phi_3)\to0$
in $\mod T$ is exact if and only if $0\to X_1\xrightarrow{f_1}X_2\xrightarrow{f_2}X_3\to0$ and $0\to Y_1\xrightarrow{g_1}Y_2\xrightarrow{g_2}Y_3\to0$ are exact in $\mod A$ and $\mod B$ respectively.
Indecomposable projective $T$-modules are exactly $(P, 0)$ and $(M\otimes Q, Q)$, where $P$ runs over indecomposable projective $A$-modules, and $Q$ runs over indecomposable projective $B$-modules.
%
We refer the reader to \cite{AS,ARS,G} for more details.

Recall from \cite{ZP} that $_AM_B$ is {\it compatible} if $M\otimes_B-$ carries every acyclic complex of projective $B$-modules to acyclic $A$-complex and $M\in(\Gproj A)^\perp$.
If $_AM_B$ is compatible, it is not hard to see $\Tor_i^B(M,G)=0$ for any $G\in\Gproj B$ and $i\geq1$.
%
%

\begin{lem}(\cite[Theorem 1.4]{ZP})\label{lem:4.1} Let $T=\left(
                                                                                 \begin{array}{cc}
                                                                                   A & M \\
                                                                                   0 & B \\
                                                                                 \end{array}
                                                                               \right)$ be a triangular matrix algebra with $_AM_B$ compatible.
Then $(X, Y, \phi)\in\Gproj T$ if and only if $Y\in\Gproj B$ and $\phi:M\otimes_B Y\to X$ is an injective $A$-morphism with $\Coker\phi\in\Gproj A$.
\end{lem}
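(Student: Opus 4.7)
The plan is to compare a totally acyclic complex of projective $T$-modules with its $A$- and $B$-components via the natural short exact sequence linking them. Any projective $T$-module splits as $(P, 0, 0) \oplus (M \otimes_B Q, Q, \id)$ with $P \in \proj A$ and $Q \in \proj B$, so a complex $\mathbf{T}^\bullet$ of projective $T$-modules determines complexes $P^\bullet$, $Q^\bullet$ of projectives over $A$, $B$ together with a short exact sequence of $A$-complexes
$$0 \to M \otimes_B Q^\bullet \to X^\bullet \to P^\bullet \to 0,$$
where $X^\bullet$ is the $A$-component of $\mathbf{T}^\bullet$. The compatibility of $_AM_B$ is exactly what shuttles (total) acyclicity between these pieces.

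For the forward direction, I would take a totally acyclic $\mathbf{T}^\bullet$ of projective $T$-modules with $Z^0(\mathbf{T}^\bullet) \cong (X, Y, \phi)$. Acyclicity of $\mathbf{T}^\bullet$ forces $X^\bullet$ and $Q^\bullet$ to be acyclic; the first compatibility clause makes $M \otimes_B Q^\bullet$ acyclic, hence $P^\bullet$ is acyclic as well. Using the natural identifications
$$\Hom_T(\mathbf{T}^\bullet, (A,0,0)) \cong \Hom_A(P^\bullet, A), \quad \Hom_T(\mathbf{T}^\bullet, (M,B,\id)) \cong \Hom_B(Q^\bullet, B) \oplus \Hom_A(P^\bullet, M),$$
both acyclic since $T \cong (A,0,0) \oplus (M, B, \id)$ as a left $T$-module and $\mathbf{T}^\bullet$ is totally acyclic, I would deduce total acyclicity of $P^\bullet$ from the first identification, then invoke $M \in (\Gproj A)^\perp$ to get acyclicity of $\Hom_A(P^\bullet, M)$, and finally extract total acyclicity of $Q^\bullet$ from the second identification. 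Passing to $0$-cycles in the $A$-complex sequence, and using $\Tor_i^B(M, Z^1(Q^\bullet)) = 0$ (since $Z^1(Q^\bullet) \in \Gproj B$, by compatibility), identifies the resulting sequence with $0 \to M \otimes_B Y \xrightarrow{\phi} X \to \Coker \phi \to 0$, which yields the three stated conditions.

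For the converse, I would pick totally acyclic complexes $Q^\bullet$ and $P^\bullet$ with $Z^0(Q^\bullet) = Y$ and $Z^0(P^\bullet) = \Coker\phi$, then realize the extension class of $0 \to M \otimes_B Y \to X \to \Coker\phi \to 0$ in $\Ext_A^1(\Coker\phi, M \otimes_B Y)$ by a chain map $h^\bullet \colon P^\bullet \to (M \otimes_B Q^\bullet)[1]$. Let $X^\bullet$ be the corresponding cone-like complex sitting in the short exact sequence above. Assembling $\mathbf{T}^i = (P^i \oplus (M \otimes_B Q^i), Q^i, \iota^i)$ with the differential prescribed by this data should produce an acyclic complex of projective $T$-modules with $Z^0(\mathbf{T}^\bullet) \cong (X, Y, \phi)$, and the Hom decompositions above, together with compatibility, make $\Hom_T(\mathbf{T}^\bullet, T)$ acyclic, giving $(X, Y, \phi) \in \Gproj T$.

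The hard part will be the gluing step in the converse: constructing $h^\bullet$ so that the $T$-valued differential squares to zero, so that taking $0$-cycles of $\mathbf{T}^\bullet$ reproduces the specific triple $(X, Y, \phi)$ (not merely some extension of $\Coker\phi$ by $M \otimes_B Y$), and so that total acyclicity of $\mathbf{T}^\bullet$ can be verified. It is precisely here that the full strength of the compatibility hypothesis (both $\Tor^B_i(M,-)$ vanishing on $\Gproj B$ and $M \in (\Gproj A)^\perp$) is used.
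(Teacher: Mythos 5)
A preliminary remark: the paper offers no proof of this lemma at all --- it is quoted verbatim from Zhang \cite[Theorem 1.4]{ZP} --- so there is no in-paper argument to compare against; I judge your proposal on its own merits. Your forward direction is essentially complete and is the standard component-by-component analysis. Two small corrections there. First, the identification $\Hom_T(\mathbf{T}^\bullet,(M,B,\id))\cong\Hom_B(Q^\bullet,B)\oplus\Hom_A(P^\bullet,M)$ holds only degreewise: the differential of $\mathbf{T}^\bullet$ has an off-diagonal component $h^i\colon P^i\to M\otimes_BQ^{i+1}$, so what you actually obtain is a short exact sequence of complexes $0\to\Hom_A(P^\bullet,M)\to\Hom_T(\mathbf{T}^\bullet,(M,B,\id))\to\Hom_B(Q^\bullet,B)\to0$. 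This is harmless --- acyclicity of the middle and left terms still forces acyclicity of $\Hom_B(Q^\bullet,B)$ --- but it should be stated as an extension, not a direct sum. Second, when you invoke $M\in(\Gproj A)^\perp$ to kill $\Hom_A(P^\bullet,M)$ you are using that every $Z^i(P^\bullet)$ is Gorenstein projective, which you only know after establishing total acyclicity of $P^\bullet$; your order of steps is consistent with this, but the dependence should be made explicit.

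The genuine gap is the converse, and you have named it yourself: constructing $h^\bullet$ so that the assembled differential squares to zero, so that $Z^0(\mathbf{T}^\bullet)$ is the given triple $(X,Y,\phi)$ rather than merely some extension with the same end terms, and so that $\Hom_T(\mathbf{T}^\bullet,T)$ is acyclic. This can be pushed through --- extending $h^\bullet$ across the right half of $P^\bullet$ uses $\Ext^1_A(Z^i(P^\bullet),M\otimes_BQ^j)=0$, which holds because $M\otimes_BQ^j\in\add M$ and $M\in(\Gproj A)^\perp$ --- but as written it is a plan, not a proof. A cleaner route closes the converse without any gluing: the hypotheses give a short exact sequence of $T$-modules $0\to(M\otimes_BY,Y,\id)\to(X,Y,\phi)\to(\Coker\phi,0,0)\to0$, and your forward-direction computations already show that both end terms are Gorenstein projective over $T$. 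Indeed, for $G\in\Gproj A$ with totally acyclic complex $P^\bullet$, the complex $(P^\bullet,0,0)$ exhibits $(G,0,0)\in\Gproj T$, since $\Hom_T((P^\bullet,0,0),T)\cong\Hom_A(P^\bullet,A)\oplus\Hom_A(P^\bullet,M)$ and the second summand is acyclic by $M\in(\Gproj A)^\perp$; and for $H\in\Gproj B$ with totally acyclic $Q^\bullet$, the complex $(M\otimes_BQ^\bullet,Q^\bullet,\id)$ exhibits $(M\otimes_BH,H,\id)\in\Gproj T$, using the first compatibility clause for acyclicity, the Tor-vanishing to identify $Z^0$, and $\Hom_T((M\otimes_BQ^\bullet,Q^\bullet,\id),T)\cong\Hom_B(Q^\bullet,B)$ for total acyclicity. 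Since $\Gproj T$ is closed under extensions \cite{Ho}, $(X,Y,\phi)\in\Gproj T$ follows at once.
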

As a consequence, we have the following.
\begin{cor}\label{cor:4.3} Let $T=\left(
                                                                                 \begin{array}{cc}
                                                                                   A & M \\
                                                                                   0 & B \\
                                                                                 \end{array}
                                                                               \right)$ be a triangular matrix algebra with $_AM_B$ compatible. The following hold true.

                                                                               (1) $\Gpd_T(X, 0)=\Gpd_AX$.

                                                                                                                                                        (2) Assume that $\Gpd_AM\otimes_BG<\infty$ for any $G\in\Gproj B$.
                                                                                                                                                        Then $\Gpd_T(0,Y)<\infty$ if and only if $\Gpd_BY<\infty$.
\end{cor}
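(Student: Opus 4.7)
The plan is to apply Lemma \ref{lem:4.1} componentwise to translate $\Gpd_T$ of the special modules $(X, 0)$ and $(0, Y)$ into conditions on their $A$- and $B$-entries, using that compatibility of ${_A}M_B$ gives $\Tor_i^B(M, G) = 0$ for all $G \in \Gproj B$ and $i \geq 1$.

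For part (1), I observe that the functor $(-, 0)\colon \mod A \to \mod T$ is exact and sends $\proj A$ to $\proj T$, so any projective resolution $\cdots \to P_1 \to P_0 \to X \to 0$ over $A$ lifts to a projective resolution $\cdots \to (P_1, 0) \to (P_0, 0) \to (X, 0) \to 0$ over $T$ whose $n$th syzygy is $(\Omega^n_A X, 0)$. By Lemma \ref{lem:4.1}, $(K, 0) \in \Gproj T$ if and only if $K \in \Gproj A$ (the second component $0$ is trivially in $\Gproj B$, and the structure map $M\otimes_B 0 \to K$ is automatically injective with cokernel $K$). Combining with the standard characterization of $\Gpd$ via syzygies yields $\Gpd_T(X, 0) = \Gpd_A X$.

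For part (2), the ``only if'' direction is immediate: a Gorenstein projective resolution $0 \to (X_n, Y_n, \phi_n) \to \cdots \to (X_0, Y_0, \phi_0) \to (0, Y) \to 0$ over $T$ yields, by Lemma \ref{lem:4.1} and by extracting second components, an exact sequence $0 \to Y_n \to \cdots \to Y_0 \to Y \to 0$ with each $Y_i \in \Gproj B$, whence $\Gpd_B Y \leq n$. For the ``if'' direction, assuming $\Gpd_B Y = n < \infty$, I would use the short exact sequence in $\mod T$
$$0 \to (M\otimes_B Y, 0) \to (M\otimes_B Y, Y, \id) \to (0, Y) \to 0$$
and bound $\Gpd_T(0, Y)$ via the other two terms. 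A Gorenstein projective resolution $0 \to G_n \to \cdots \to G_0 \to Y \to 0$ over $B$ remains exact after applying $M\otimes_B -$ (by compatibility), giving an $A$-resolution of $M\otimes_B Y$ whose terms have finite $\Gpd_A$ by hypothesis; hence $\Gpd_A(M\otimes_B Y) < \infty$ and part (1) yields $\Gpd_T(M\otimes_B Y, 0) < \infty$. The same resolution fed through $Z \mapsto (M\otimes_B Z, Z, \id)$ gives an exact sequence of $T$-modules with each term in $\Gproj T$ (by Lemma \ref{lem:4.1}: $G_i \in \Gproj B$, structure map $\id$ injective, cokernel $0 \in \Gproj A$), showing $\Gpd_T(M\otimes_B Y, Y, \id) \leq n$.

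The main obstacle is controlling $\Gpd_A(M\otimes_B Y)$ in the ``if'' direction of part (2): the hypothesis only covers $G \in \Gproj B$ while $Y$ is a general module of finite Gorenstein projective dimension, and the bridge is provided precisely by the compatibility assumption, which permits transporting a Gorenstein (not projective) resolution across $M\otimes_B -$ without losing exactness.
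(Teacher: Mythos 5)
Your part (1) and the ``only if'' direction of part (2) are fine and match the paper's argument. But the ``if'' direction of part (2) has a genuine gap, and it sits exactly at the point you yourself flag as the main obstacle: the claim that a Gorenstein projective resolution $0\to G_n\to\cdots\to G_0\to Y\to0$ over $B$ remains exact after applying $M\otimes_B-$. Compatibility only yields $\Tor_i^B(M,G)=0$ for $G\in\Gproj B$ and $i\geq1$; since each $G_j$ is then $\Tor$-acyclic for $M\otimes_B-$, the homology of the complex $M\otimes_B G_\bullet$ computes $\Tor_i^B(M,Y)$, and these groups need not vanish for $1\leq i\leq n$ when $Y$ is merely of finite Gorenstein projective dimension. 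A concrete failure: take $B=kQ$ with $Q\colon 1\to2$, $A=k$, and $M$ the simple right $B$-module at vertex $2$; then $_AM_B$ is compatible (acyclic complexes of projectives over a hereditary algebra split, and $(\Gproj A)^\perp=\mod A$), yet $\Tor_1^B(M,S_1)\neq0$ for the simple $S_1$ with $\pd_BS_1=1$, so $M\otimes_B-$ does not preserve exactness of the (Gorenstein) projective resolution of $S_1$. Consequently both of your intermediate conclusions --- that $\Gpd_A(M\otimes_BY)<\infty$ and that $0\to(M\otimes_BG_n,G_n)\to\cdots\to(M\otimes_BG_0,G_0)\to(M\otimes_BY,Y)\to0$ is exact --- are unjustified for general $Y$ of finite $\Gpd_B$.

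The repair is to reorder the reduction as the paper does: first prove the claim only for $G\in\Gproj B$, where your short exact sequence $0\to(M\otimes_BG,0)\to(M\otimes_BG,G)\to(0,G)\to0$ is unproblematic ($(M\otimes_BG,G)\in\Gproj T$ by Lemma \ref{lem:4.1}, and $\Gpd_T(M\otimes_BG,0)<\infty$ by the hypothesis together with part (1)), giving $\Gpd_T(0,G)<\infty$. Then for general $Y$ with $\Gpd_BY=m$, apply the functor $Z\mapsto(0,Z)$ --- which is exact on all of $\mod B$ with no $\Tor$ condition --- to a Gorenstein projective resolution of $Y$, obtaining an exact sequence $0\to(0,G_m)\to\cdots\to(0,G_0)\to(0,Y)\to0$ in $\mod T$ whose terms all have finite $\Gpd_T$ by the claim; this forces $\Gpd_T(0,Y)<\infty$. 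In short, the bridge from $\Gproj B$ to modules of finite $\Gpd_B$ must be crossed with the exact functor $(0,-)$, not with $M\otimes_B-$.
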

\begin{proof} (1) is clear and we only prove (2).
For the ``only if'' part, let $\Gpd_T(0,Y)=n$ for some integer $n\geq 0$. Then there exists an exact sequence $$0\to(P_n, Q_n, {\phi_n})\to\cdots\to(P_1, Q_1, {\phi_1})\to(P_0,Q_0, {\phi_0})\to(0, Y)\to0$$ in $\mod T$ with each $(P_i, Q_i, {\phi_i})\in\Gproj T$. It follows that $$0\to Q_n\to\cdots\to Q_1\to Q_0\to Y\to0$$ is an exact sequence in $\mod B$. Since each $(P_i, Q_i, {\phi_i})\in\Gproj T$, it follows from Lemma \ref{lem:4.1} that each $Q_i\in\Gproj B$. Therefore, $\Gpd_BY\leq n$.

For the ``if'' part, we first claim $\Gpd_T(0,G)<\infty$ for any $G\in\Gproj B$. To get this, let $G\in\Gproj B$. Consider the following exact sequence of $T$-modules:
$$0\to(M\otimes_BG, 0)\to(M\otimes_BG, G)\to(0, G)\to0.$$
By assumption, $\Gpd_AM\otimes_BG<\infty$. As a consequence of (1), we get $\Gpd_T(M\otimes_BG, 0)<\infty$. Notice that $(M\otimes_BG, G)\in\Gproj T$, we obtain $\Gpd_T(0,G)<\infty$ and the claim follows.

Now assume $\Gpd_BY=m$ for some integer $m\geq0$.
Take a Gorenstein projective resolution $0\to G_m\to\cdots\to G_1\to G_0\to Y\to0$ of $Y$. We have $0\to (0,G_m)\to\cdots\to (0,G_1)\to (0,G_0)\to (0,Y)\to0$ is exact in $\mod T$. By the claim, one has $\pd_T(0,G_i)<\infty$ for every $i$. Therefore, we conclude that $\pd_T(0,Y)<\infty$.
\end{proof}

Let $e_A=\left(
                           \begin{array}{cc}
                             1 & 0 \\
                             0 & 0 \\
                           \end{array}
                         \right)$ and $e_B=\left(
                           \begin{array}{cc}
                             0 & 0 \\
                             0 & 1 \\
                           \end{array}
                         \right)$ be the idempotents of $T$. It is known that $A\cong e_ATe_A\cong T/Te_BT$ and $B\cong e_BTe_B\cong T/Te_AT$ as algebras.
                          Denote by $S_{e_A}$ and $S_{e_B}$ the Schur functors associative to $e_A$ and $e_B$ respectively.

                          We have the following observation.

\begin{lem}\label{lem:4.3} The following statements hold true:
\begin{enumerate}
\item[(1)] $\Tor_i^{A}(Te_A,F)=0$ for any $F\in\Gproj A$ and $i\geq1$.
\item[(2)] If $_AM_B$ is compatible, then $\Tor_i^{B}(Te_B,G)=0$ for any $G\in\Gproj B$ and $i\geq1$.
\end{enumerate}
\end{lem}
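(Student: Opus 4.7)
The plan is to compute the right $A$-module (resp.\ $B$-module) structure of $Te_A$ (resp.\ $Te_B$) explicitly via the matrix description of $T$, and then reduce both statements to elementary/assumed Tor-vanishing facts. The key identifications are that $Te_A$ is free as a right $A$-module, while $Te_B$ decomposes as $M\oplus B$ on the right, so that (2) will reduce to a statement about $M$ which is precisely what compatibility of ${_A}M_B$ gives.

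For (1), I would first compute
$$Te_A=\begin{pmatrix} A & M \\ 0 & B \end{pmatrix}\begin{pmatrix} 1 & 0 \\ 0 & 0 \end{pmatrix}=\begin{pmatrix} A & 0 \\ 0 & 0 \end{pmatrix},$$
and identify $e_ATe_A\cong A$. Then as a right $e_ATe_A$-module, $Te_A$ is simply $A$ itself (the right $A$-action on the first column coincides with the regular right action of $A$ on itself). Being a free (in particular flat) right $A$-module, $\Tor_i^A(Te_A,F)=0$ for all $i\geq 1$ and any $F\in\mod A$, which a fortiori gives (1).

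For (2), the analogous computation gives
$$Te_B=\begin{pmatrix} 0 & M \\ 0 & B \end{pmatrix},$$
and as a right $e_BTe_B\cong B$-module one checks $Te_B\cong M\oplus B$ (the $B$-action by right multiplication is the obvious one on each summand). Hence, for any $G\in\Gproj B$ and any $i\geq 1$,
$$\Tor_i^B(Te_B,G)\cong \Tor_i^B(M,G)\oplus \Tor_i^B(B,G)=\Tor_i^B(M,G).$$
The second summand vanishes because $B$ is $B$-flat, so the claim comes down to showing $\Tor_i^B(M,G)=0$ for $G\in\Gproj B$ and $i\geq 1$. This is exactly the consequence of compatibility that the paper records right before the lemma: writing $G=Z^0(X^\bullet)$ for some totally acyclic $X^\bullet$ in $\proj B$, the fact that $M\otimes_B X^\bullet$ remains acyclic (compatibility) lets one compute $\Tor_i^B(M,G)$ via the truncation of $X^\bullet$ and conclude that it vanishes for all $i\geq 1$.

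The proof is essentially a direct computation; there is no real obstacle once one writes $Te_A$ and $Te_B$ as right modules over the corner algebras, since (1) becomes flatness and (2) is immediately reduced to the assumed Tor-vanishing of $M$ against Gorenstein projective $B$-modules.
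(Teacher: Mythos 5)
Your proof is correct and follows essentially the same route as the paper: identify $Te_A\cong A$ as right $A$-modules (so the Tor vanishing is immediate from flatness) and $Te_B\cong M\oplus B$ as right $B$-modules, reducing (2) to the vanishing $\Tor_i^B(M,G)=0$ that compatibility of ${_A}M_B$ provides. The extra detail you supply (the explicit matrix computation and the truncation argument deriving the Tor-vanishing from acyclicity of $M\otimes_B X^\bullet$) only fills in steps the paper leaves implicit.
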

\begin{proof} (1) Since $Te_A\cong A$ as right $A$-modules, this assertion is trivial.

(2) Note that $Te_B\cong M_B\oplus B_B$ is an isomorphism of right $B$-modules.
Since $_AM_B$ is compatible, $\Tor_i^B(M,G)=0$ for any $G\in\Gproj B$ and $i\geq1$. And hence $\Tor_i^B(Te_B,G)=0$ for any $G\in\Gproj B$ and $i\geq1$.
\end{proof}

To take $R=T$ and $e=e_A$ as in Theorem \ref{thm:1.1} and Corollary \ref{cor:3.3}, we get the following
\begin{thm}\label{thm:1.3} Let $T=\left(
                                                                                 \begin{array}{cc}
                                                                                   A & M \\
                                                                                   0 & B \\
                                                                                 \end{array}
                                                                               \right)$ be a triangular matrix algebra with $_AM_B$ compatible. The following statements hold true:
\begin{enumerate}
\item[(1)] The Schur functor $S_{e_A}$ induces a triangle-equivalence $\D_{def}(T)\simeq\D_{def}(A)$ if and only if $B$ is Gorenstein and $\Gpd_AM\otimes_BY<\infty$ for any $Y\in\Gproj B$.
In this case, we have a singular equivalence $\D_{sg}(\Aus(T))\simeq\D_{sg}(\Aus(A))$ between the relative Auslander algebras $\Aus(T)$ and $\Aus(A)$ provided that $T$ is CM-finite.
\item[(2)] The Schur functor $S_{e_A}$ induces the following exact commutative diagram:
$$\xymatrix{0\ar[r]&\underline{\Gproj T}\ar[r]\ar[d]
&\D_{sg}(T)\ar[r]\ar[d]
& \D_{def}(T)\ar[d]\ar[r]&0\\
0\ar[r]&\underline{\Gproj A}\ar[r]
&\D_{sg}(A)\ar[r]
&\D_{def}(A)\ar[r]&0
}$$
with all vertical functors triangle-equivalences if and only if $B$ has finite global dimension, $\pd_AM<\infty$ and $\Gpd_AM\otimes_BY<\infty$ for any $Y\in\Gproj B$. In this case $T$ is Gorenstein (resp. CM-free) if and only if so is $A$.
\end{enumerate}
\end{thm}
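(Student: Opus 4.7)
The plan is to specialize Theorem \ref{thm:1.1} and Corollary \ref{cor:3.3} to $R=T$ and $e=e_A$. I will use the identifications $e_ATe_A\cong A$, $T/Te_AT\cong B$, $Te_A\cong A$ as a right $A$-module, and $e_AT\cong A\oplus M$ as a left $A$-module. Lemma \ref{lem:4.3}(1) supplies the $\Tor$-vanishing hypothesis of both results, so the task reduces to translating the conditions (C1), (C2), (C3) of those statements into the stated conditions on $B$ and $M$.

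For (C1): Lemma \ref{lem:4.1} writes every $F\in\Gproj T$ as $(X,Y,\phi)$ with $Y\in\Gproj B$, $\phi$ injective, and $\Coker\phi\in\Gproj A$, which yields a short exact sequence $0\to M\otimes_BY\to X\to\Coker\phi\to0$. Since $S_{e_A}(F)=X$, the condition $\Gpd_AS_{e_A}(F)<\infty$ for every $F\in\Gproj T$ is equivalent to $\Gpd_AM\otimes_BY<\infty$ for every $Y\in\Gproj B$, the ``only if'' direction being witnessed by $F=(M\otimes_BY,Y,\id)\in\Gproj T$. Condition (C2) is automatic: for $N\in\Gproj A$, $T_{e_A}(N)=(N,0)$, and Corollary \ref{cor:4.3}(1) gives $\Gpd_TT_{e_A}(N)=\Gpd_AN=0$.

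For (C3) of Theorem \ref{thm:1.1}, the idempotent $e_A$ being Gorenstein singularly-complete means $\Gpd_T(0,Y)<\infty$ for every $B$-module $Y$. Once (C1) holds, Corollary \ref{cor:4.3}(2) rephrases this as $\Gpd_BY<\infty$ for every $Y\in\mod B$, which is equivalent to $B$ being Gorenstein by the theorem of Hoshino cited in Proposition \ref{prop:A.4}. This proves part (1). For the CM-finite addendum, the fully faithful embedding $\Gproj A\hookrightarrow\Gproj T$, $X\mapsto(X,0,0)$, given by Lemma \ref{lem:4.1} shows $A$ is CM-finite whenever $T$ is, and Remark \ref{rem:2.4}(2) applied to both algebras then delivers $\D_{sg}(\Aus(T))\simeq\D_{sg}(\Aus(A))$.

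For part (2), Corollary \ref{cor:3.3} replaces (C3) by ``$e_A$ singularly-complete and $\pd_Ae_AT<\infty$''. The second part is $\pd_AM<\infty$ via $e_AT\cong A\oplus M$. For the first, a projective-dimension analogue of Corollary \ref{cor:4.3}(2), proved by the same argument using the exact sequence $0\to(M\otimes_BG,0)\to(M\otimes_BG,G)\to(0,G)\to0$ together with the Tor-vanishing $\Tor_i^B(M,Y)=0$ on $B$-modules of finite pd (which follows from compatibility by a standard dimension-shifting argument), translates the singular-completeness of $e_A$ into $\pd_BY<\infty$ for every $B$-module $Y$, i.e., $B$ has finite global dimension. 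Finally, $T$ is Gorenstein iff $\D_{def}(T)=0$ iff $\D_{def}(A)=0$ iff $A$ is Gorenstein (Corollary \ref{cor:A.6} applied on both sides), and $T$ is CM-free iff $\underline{\Gproj T}=0$ iff $\underline{\Gproj A}=0$ iff $A$ is CM-free, via the left vertical equivalence of the diagram. The main technical obstacle is the pd analogue of Corollary \ref{cor:4.3}(2): uniform control of $\pd_AM\otimes_BY$ for all $B$-modules $Y$ of finite projective dimension must be extracted from the combination of $\pd_AM<\infty$ and compatibility of $_AM_B$.
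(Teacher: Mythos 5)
Your proposal follows the paper's route almost exactly: specialize Theorem \ref{thm:1.1} and Corollary \ref{cor:3.3} to $e=e_A$, use Lemma \ref{lem:4.3}(1) for the $\Tor$ hypothesis, translate (C1) via Lemma \ref{lem:4.1} and the sequence $0\to M\otimes_BY\to X\to\Coker\phi\to0$, get (C2) from $T_{e_A}(N)\cong(N,0)$, and handle (C3) through Corollary \ref{cor:4.3}(2) and Hoshino's theorem. Part (1) and the CM-finite addendum are fine as written.

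The one step I would flag is in part (2). Your auxiliary claim that $\Tor_i^B(M,Y)=0$ for all $i\ge1$ and all $B$-modules $Y$ of finite projective dimension ``by dimension shifting from compatibility'' is false in general: compatibility only gives vanishing against Gorenstein projectives, and dimension shifting from a finite projective resolution only kills $\Tor_i^B(M,Y)$ for $i>\pd_BY$, not for $i=1,\dots,\pd_BY$ (already for $B$ hereditary and $M$, $Y$ simple one can have $\Tor_1^B(M,Y)\neq0$). Fortunately this claim is not needed, and the ``main technical obstacle'' you identify --- uniform control of $\pd_AM\otimes_BY$ over all finite-pd $Y$ --- is illusory. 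The paper only controls $\pd_AM\otimes_BP$ for $P\in\proj B$ (immediate from $\pd_AM<\infty$), deduces $\pd_T(0,P)<\infty$ from the sequence $0\to(M\otimes_BP,0)\to(M\otimes_BP,P)\to(0,P)\to0$ with middle term projective, and then for general $Y$ with $\pd_BY<\infty$ applies the exact functor $(0,-)$ (not $M\otimes_B-$) to a finite projective resolution of $Y$, getting an exact sequence $0\to(0,P_n)\to\cdots\to(0,P_0)\to(0,Y)\to0$ in $\mod T$ whose terms all have finite projective dimension. No $\Tor$-vanishing beyond the projective case enters. With that substitution your argument for part (2) closes, and the Gorenstein/CM-free consequences via Corollary \ref{cor:A.6} are as in the paper.
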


\begin{proof} Since $e_ATe_A\cong A$, we will prove by replacing $R$ with $T$ and $e$ with $e_A$ in Theorem \ref{thm:1.1} and Corollary \ref{cor:3.3} respectively. By Lemma \ref{lem:4.3} (1),  $\Tor_i^{A}(Te_A,F)=0$ for any $F\in\Gproj A$ and $i\geq1$ .

(1) For the ``only if'' part, assume that $S_{e_A}$ induces a triangle-equivalence $\D_{def}(T)\simeq\D_{def}(A)$. Then conditions (C1)-(C3) in Theorem \ref{thm:1.1} are satisfied. For any $Y\in\Gproj B$, it follows from Lemma \ref{lem:4.1} that $(M\otimes_BY, Y)\in\Gproj T$. Since $S_{e_A}(M\otimes_BY, Y)\cong  M\otimes_BY$, we infer $\Gpd_AM\otimes_BY<\infty$ from (C1). Now let $Z\in\mod B$.
Note that $Z$ viewed as a $T$-module is isomorphic to $(0, Z)$. We infer that $\Gpd_T(0, Z)<\infty$ from (C3). In view of Corollary \ref{cor:4.3} (2), we obtain $\Gpd_BZ<\infty$. Then it follows from \cite[Theorem]{Hos} that $B$ is Gorenstein.

For the ``if'' part, assume that $B$ is Gorenstein and $\Gpd_AM\otimes_BY<\infty$ for any $Y\in\Gproj B$. In view of Theorem \ref{thm:1.1}, it suffices to show conditions (C1)-(C3) in Theorem \ref{thm:1.1} are satisfied. Let $(X, Y, \phi)\in\Gproj T$. It follows from Lemma \ref{lem:4.1} that $Y\in\Gproj B$ and $\phi:M\otimes_B Y\to X$ is an injective $A$-morphism with $\Coker\phi\in\Gproj A$. Now consider the following exact sequence of $A$-modules:
$$0\to M\otimes_B Y\xrightarrow{\phi} X \to\Coker\phi\to0.$$
Because $\Coker\phi\in\Gproj A$ and $\Gpd_AM\otimes_BY<\infty$, we have that $\Gpd_AX<\infty$. Notice that $X\cong S_{e_A}(X, Y, \phi)$, we have (C1) follows. Now for any $F\in\Gproj A$, we have $\Hom_T(T_{e_A}(F),(X', Y' , \phi'))\cong\Hom_A(F,S_{e_A}(X', Y' , \phi'))\cong\Hom_A(F,X')\cong\Hom_T((F, 0),(X', Y' , \phi'))$ for any $(X', Y' , \phi')\in\mod T$. By Yoneda Lemma, we have $T_{e_A}(F)\cong(F, 0)$.
Following Lemma \ref{lem:4.1}, $T_{e_A}(F)\in\Gproj T$ and then condition (C2) follows. For (C3), let $Z\in\mod B$. Notice that $Z$ viewed as a $T$-module is isomorphic to $(0, Z)$, we will show $\Gpd_T(0, Z)<\infty$.
Since $B$ is Gorenstein, one gets $\Gpd_BZ<\infty$. Hence it follows from Corollary \ref{cor:4.3} (2) that $\Gpd_T(0, Z)<\infty$.

Following Lemma \ref{lem:4.1}, it is not hard to see if $T$ is CM-finite, then so is $A$.  By Remark \ref{rem:2.4}, we infer a singular equivalence $\D_{sg}(\Aus(T))\simeq\D_{sg}(\Aus(A))$ from the triangle-equivalence $\D_{def}(T)\simeq\D_{def}(A)$.

(2) For the ``only if'' part, assume that $S_{e_A}$ induces such an exact commutative diagram. It follows from Corollary \ref{cor:3.3} that $e_A$ is singularly-complete and $\pd_Ae_AT<\infty$. Notice that $e_AT\cong A\oplus M$ is an isomorphism of $A$-modules, we obtain $\pd_AM<\infty$. Now for any $Z\in\mod B$, since $e_A$ is singularly-complete, it follows that $\pd_T(0, Z)<\infty$. We may assume $\pd_T(0, Z)=n$ for some integer $n\geq0$. Take a projective resolution $0\to(P_n, Q_n, {\phi_n})\to\cdots\to(P_1, Q_1, {\phi_1})\to(P_0,Q_0, {\phi_0})\to(0, Z)\to0$ of $(0, Z)$. It follows that $0\to Q_n\to\cdots\to Q_1\to Q_0\to Z\to0$ is a projective resolution of $Z$ and then $\pd_BZ<\infty$. This implies that $B$ has finite global dimension. Note that $S_{e_A}$ induces a triangle-equivalence $\D_{def}(T)\simeq\D_{def}(A)$. We have that $\Gpd_AM\otimes_BY<\infty$ for any $Y\in\Gproj B$ as a consequence of (1).

For the ``if'' part, assume that $B$ has finite global dimension, $\pd_AM<\infty$ and $\Gpd_AM\otimes_BY<\infty$ for any $Y\in\Gproj B$. By the foregoing proof we know that conditions (C1) and (C2) in Corollary \ref{cor:3.3} are satisfied. To get this assertion, in view of Corollary \ref{cor:3.3}, it suffices to show $e_A$ is singularly-complete and $\pd_Ae_AT<\infty$. Since $\pd_AM<\infty$, we infer $\pd_Ae_AT<\infty$ from the isomorphism $e_AT\cong A\oplus M$. Now let $Z\in\mod B$. Because $B$ has finite global dimension, one has $\pd_BZ<\infty$. If $Z\in\proj B$, we get that $\pd_AM\otimes_BZ<\infty$, since $\pd_AM<\infty$. It follows from \cite[Lemma 3.1 (1)]{C1} that $\pd_T(M\otimes_BZ,0)<\infty$. Notice that $(M\otimes_BZ,Z)$ is projective, and hence we infer $\pd_T(0, Z)<\infty$ from the short exact sequence $0\to(M\otimes_BZ,0)\to(M\otimes_BZ,Z)\to(0,Z)\to0$. Now assume $\pd_BZ=n$ for some integer $n>0$. Take a projective resolution $0\to P_n\to\cdots\to P_1\to P_0\to Z\to0$ of $Z$. We have $0\to (0,P_n)\to\cdots\to (0,P_1)\to (0,P_0)\to (0,Z)\to0$ is exact in $\mod T$. Note that $\pd_T(0,P_i)<\infty$ for every $i$. We conclude that $\pd_T(0,Z)<\infty$ and hence $e_A$ is singularly-complete.

Note that $T$ is Gorenstein if and only if $\D_{def}(T)=0$ (Corollary \ref{cor:A.6}); while $T$ is CM-free if and only $\underline{\Gproj T}=0$. Then we infer that $T$ is Gorenstein (resp. CM-free) if and only if so is $A$ from
such exact commutative diagram.
\end{proof}

\begin{exa}\label{exa:4.5}{\rm Let $k$ be a field and $Q$ the following quiver:
$$\xymatrix{
1\ar[r]^\alpha &2 \ar@/^/[r]^{\beta}&3\ar[r]^{\delta}\ar@/^/[l]^{\gamma} &4.
}$$
Consider the $k$-algebra $T=kQ/I$, where $I$ is generated by $\beta\gamma$, $\gamma\beta$ and $\delta\beta$. Let $e_i$ be the idempotent corresponding to the vertex $i$ and put $e=e_2+e_3+e_4$.
Denote by $A=eTe$ and $B=e_1Te_1$. Then $T=\left(
                                                                                 \begin{array}{cc}
                                                                                   A & M \\
                                                                                   0 & B \\
                                                                                 \end{array}
                                                                               \right)$ with $M=eTe_1$. Clearly $B=k$, and hence every $B$-module (left or right) is projective. It is easy to verify $_AM$ is projective and then $\Gpd_AM\otimes_BY<\infty$ for any $Y\in\mod B$. Following Theorem \ref{thm:1.3}, we get the following exact commutative diagram:
$$\xymatrix{0\ar[r]&\underline{\Gproj T}\ar[r]\ar[d]
&\D_{sg}(T)\ar[r]\ar[d]
& \D_{def}(T)\ar[d]\ar[r]&0\\
0\ar[r]&\underline{\Gproj A}\ar[r]
&\D_{sg}(A)\ar[r]
&\D_{def}(A)\ar[r]&0
}$$
with all vertical functors triangle-equivalences.  Note that $A$ is of radical square zero but not self-injective. Following \cite{C2'} $A$ is CM-free, and then so is $T$.
Thus $\underline{\Gproj T}$ and $\underline{\Gproj A}$ are trivial. Hence we get triangle-equivalences
$\D_{def}(T)=\D_{sg}(T)\simeq\D_{sg}(A)$($=\D_{def}(A)$).
}\end{exa}

To take $R=T$ and $e=e_B$ as in Theorem \ref{thm:1.1} and Corollary \ref{cor:3.3}, we get the following
\begin{thm}\label{thm:1.2} Let $T=\left(
                                                                                 \begin{array}{cc}
                                                                                   A & M \\
                                                                                   0 & B \\
                                                                                 \end{array}
                                                                               \right)$ be a triangular matrix algebra with $_AM_B$ compatible. The following statements hold true:
\begin{enumerate}
\item[(1)] The Schur functor $S_{e_B}$ induces a triangle-equivalence $\D_{def}(T)\simeq\D_{def}(B)$ if and only if $A$ is Gorenstein.
In this case, we have a singular equivalence $\D_{sg}(\Aus(T))\simeq\D_{sg}(\Aus(B))$ between the relative Auslander algebras $\Aus(T)$ and $\Aus(B)$ provided that $T$ is CM-finite.
\item[(2)] The Schur functor $S_{e_B}$ induces the following exact commutative diagram:
$$\xymatrix{0\ar[r]&\underline{\Gproj T}\ar[r]\ar[d]
&\D_{sg}(T)\ar[r]\ar[d]
& \D_{def}(T)\ar[d]\ar[r]&0\\
0\ar[r]&\underline{\Gproj B}\ar[r]
&\D_{sg}(B)\ar[r]
&\D_{def}(B)\ar[r]&0
}$$
with all vertical functors triangle-equivalences if and only if $A$ has finite global dimension. In this case, $T$ is Gorenstein (resp. CM-free) if and only if so is $B$.
\end{enumerate}
\end{thm}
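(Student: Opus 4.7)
The plan is to apply Theorem \ref{thm:1.1} and Corollary \ref{cor:3.3} with $R=T$ and $e=e_B$, noting that $e_BTe_B\cong B$ and that the $\Tor$-vanishing hypothesis $\Tor_i^{B}(Te_B,G)=0$ for $G\in\Gproj B$ and $i$ large is automatic by Lemma \ref{lem:4.3}(2). After that, both parts reduce to recognising what the three conditions (C1)--(C3) become in the triangular-matrix setting.

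For part (1), I would first observe that (C1) and (C2) are \emph{automatic}. Indeed, $S_{e_B}(X,Y,\phi)=Y$, so by Lemma \ref{lem:4.1} any $F=(X,Y,\phi)\in\Gproj T$ has $S_{e_B}(F)=Y\in\Gproj B$, giving $\Gpd_B S_{e_B}(F)=0$. Dually, a direct application of the Yoneda lemma (as in the proof of Theorem \ref{thm:1.3}) identifies $T_{e_B}(G)\cong(M\otimes_B G,\,G,\,\mathrm{id})$; since $\mathrm{id}\colon M\otimes_B G\to M\otimes_B G$ is injective with zero cokernel, Lemma \ref{lem:4.1} yields $T_{e_B}(G)\in\Gproj T$, so (C2) holds. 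It remains to translate (C3): since $T/Te_BT\cong A$, the condition says every $A$-module (regarded as $(X,0)\in\mod T$) has finite Gorenstein projective dimension over $T$, and by Corollary \ref{cor:4.3}(1) this equals $\Gpd_A X$. Hence (C3) is equivalent to $\Gpd_A X<\infty$ for all $X\in\mod A$, which by \cite[Theorem]{Hos} is exactly the Gorensteinness of $A$. For the CM-finite supplement, Lemma \ref{lem:4.1} implies that if $T$ is CM-finite then so is $B$, and Remark \ref{rem:2.4}(2) then converts the triangle-equivalence $\D_{def}(T)\simeq\D_{def}(B)$ into the singular equivalence $\D_{sg}(\Aus(T))\simeq\D_{sg}(\Aus(B))$.

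For part (2), I would apply Corollary \ref{cor:3.3}; its conditions (C1) and (C2) are the same as in part (1) and hence still automatic, while (C3) is replaced by ``$e_B$ is singularly-complete and $\pd_B e_BT<\infty$''. A direct matrix computation gives $e_BT\cong B$ as left $B$-modules, so $\pd_B e_BT=0$ for free. For the singularly-complete part, $T/Te_BT\cong A$, so we need $\pd_T(X,0)<\infty$ for every $X\in\mod A$; since projective $T$-modules of the form $(P,0)$ are precisely the $P\in\proj A$, the standard argument (the same one used in the proof of Corollary \ref{cor:4.3}(1)) shows $\pd_T(X,0)=\pd_A X$. Therefore (C3) of Corollary \ref{cor:3.3} amounts to $\gldim A<\infty$. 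The concluding sentence then uses the diagram of triangle-equivalences: $T$ is Gorenstein iff $\D_{def}(T)=0$ (Corollary \ref{cor:A.6}) iff $\D_{def}(B)=0$ iff $B$ is Gorenstein; $T$ is CM-free iff $\underline{\Gproj T}=0$ iff $\underline{\Gproj B}=0$ iff $B$ is CM-free.

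The main obstacle is minor: it is verifying that (C1) and (C2) become trivially true under compatibility, which depends on the correct identification $T_{e_B}(G)\cong(M\otimes_B G,G,\mathrm{id})$ and the fact that Lemma \ref{lem:4.1} makes such objects Gorenstein projective \emph{on the nose}. Once this bookkeeping is done, everything else is a direct bootstrap from Theorem \ref{thm:1.1}, Corollary \ref{cor:3.3}, Corollary \ref{cor:4.3}(1), and the module-theoretic description of triangular matrix algebras, so no additional machinery is needed.
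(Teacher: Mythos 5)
Your proposal is correct and follows essentially the same route as the paper: specialise Theorem \ref{thm:1.1} and Corollary \ref{cor:3.3} to $e=e_B$ with the Tor-vanishing supplied by Lemma \ref{lem:4.3}(2), observe that (C1) and (C2) hold automatically via $S_{e_B}(X,Y,\phi)\cong Y$ and the Yoneda identification $T_{e_B}(G)\cong(M\otimes_BG,G)$ together with Lemma \ref{lem:4.1}, and then translate (C3) into Gorensteinness of $A$ (via Corollary \ref{cor:4.3}(1) and Hoshino's theorem) in part (1) and into $\gldim A<\infty$ (via $\pd_T(W,0)=\pd_AW$) in part (2). The only cosmetic difference is that the paper cites \cite[Lemma 3.1 (1)]{C1} for $\pd_T(W,0)=\pd_AW$ where you invoke the standard resolution argument.
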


\begin{proof} Notice that $e_BTe_B\cong B$, we will prove by replacing $R$ with $T$ and $e$ with $e_B$ in Theorem \ref{thm:1.1} and Corollary \ref{cor:3.3} respectively. Since $_AM_B$ is compatible, by Lemma \ref{lem:4.3}, we get $\Tor_i^{B}(Te_B,G)=0$ for any $G\in\Gproj B$ and $i\geq1$.

(1) Let $(X, Y , \phi)\in\Gproj T$, it follows that $S_{e_B}(X, Y , \phi)\cong Y$. Following Lemma \ref{lem:4.1}, we have $Y\in\Gproj B$ and then condition (C1) in Theorem \ref{thm:1.1} follows. Now for any $G\in\Gproj B$, we have $\Hom_T(T_{e_B}(G),(X', Y' , \phi'))\cong\Hom_B(G,S_{e_B}(X', Y' , \phi'))\cong\Hom_B(G,Y')\cong\Hom_T((M\otimes_BG, G),(X', Y' , \phi'))$ for every $(X', Y' , \phi')\in\mod T$.  By Yoneda Lemma, we have $T_{e_B}(G)\cong(M\otimes_BG, G)$. Then $T_{e_B}(G)$ is a Gorenstein projective $T$-module by Lemma \ref{lem:4.1} and hence condition (C2) in Theorem \ref{thm:1.1} follows.
Thus to get the desired assertion, in view of Theorem \ref{thm:1.1}, it suffices to show $A$ is Gorenstein if and only if $e_B$ is Gorenstein singularly-complete. Note that every $A$-module $W\in\mod A$ viewed as a $T$-module is isomorphic to $(W, 0)$.
Combine Corollary \ref{cor:4.3} (1) with \cite[Theorem]{Hos}, we conclude that $A$ is Gorenstein if and only if every $A$-module has finite Gorenstein projective dimension as a $T$-module if and only if $e_B$ is Gorenstein singularly-complete.

In this case, assume that $T$ is CM-finite. It is not hard to see $B$ is also CM-finite. By Remark \ref{rem:2.4}, we infer a singular equivalence $\D_{sg}(\Aus(T))\simeq\D_{sg}(\Aus(B))$ from the triangle-equivalence $\D_{def}(T)\simeq\D_{def}(B)$.

(2) By the foregoing proof, we get conditions (C1) and (C2) in Corollary \ref{cor:3.3} are always satisfied. Notice that there exists an isomorphism of $B$-modules $e_BT\cong B$, we get $e_BT$ is projective. In view of Corollary \ref{cor:3.3}, it suffices to show $A$ has finite global dimension if and only if $e_B$ is Gorenstein singularly-complete. Note that every $A$-module $W\in\mod A$ viewed as a $T$-module is isomorphic to $(W, 0)$.
It follows from \cite[Lemma 3.1 (1)]{C1} that $\pd_T(W, 0)=\pd_AW$. Therefore, $A$ has finite global dimension if and only if $\pd_T(W, 0)<\infty$ if and only if $e_B$ is singularly-complete.

In this case, by a similar argument as that in the proof of Theorem \ref{thm:1.3} (2), we conclude that $T$ is Gorenstein (resp. CM-free) if and only if so is $B$.
\end{proof}

\begin{exa}\label{exa:4.7} {\rm (1) Let $k$ be a field and $Q$ the following quiver:
$$\xymatrix{1\ar[r]^\alpha &2&\\
3\ar[u]^{\gamma} \ar@/^/[r]^{\beta}&4\ar[u]_{\delta}\ar[r]^{\theta}\ar@/^/[l]^{\beta'} &5.
}$$
Consider the $k$-algebra $T=kQ/I$, where $I$ is generated by $\beta'\beta$, $\beta\beta'$, $\theta\beta$ and $\alpha\gamma-\delta\beta$. Let $e_i$ be the idempotent corresponding to the vertex $i$ and put $e=e_3+e_4+e_5$.
Denote by $A=(1-e)T(1-e)$ and $B=eTe$. Then $T=\left(
                                                                                 \begin{array}{cc}
                                                                                   A & M \\
                                                                                   0 & B \\
                                                                                 \end{array}
                                                                               \right)$ with $M=(1-e)Te$. It is easy to see the global dimension of $A$ is $1$ and $M_B$ is projective. Then $_AM_B$ is compatible.
                                                                                Following Theorem \ref{thm:1.2}, we get the following exact commutative diagram:
$$\xymatrix{0\ar[r]&\underline{\Gproj T}\ar[r]\ar[d]
&\D_{sg}(T)\ar[r]\ar[d]
& \D_{def}(T)\ar[d]\ar[r]&0\\
0\ar[r]&\underline{\Gproj B}\ar[r]
&\D_{sg}(B)\ar[r]
&\D_{def}(B)\ar[r]&0
}$$
with all vertical functors triangle-equivalences. Notice that $B$ is CM-free, then so is $T$. Thus $\underline{\Gproj T}$ and $\underline{\Gproj B}$ are trivial. Hence we get triangle-equivalences $\D_{def}(T)=\D_{sg}(T)\simeq\D_{sg}(B)$($=\D_{def}(B)$).

(2) Let $k$ be a field and $Q$ the following quiver:
$$\xymatrix{1\ar@/^/[r]^\alpha &2\ar@/^/[l]^{\alpha'}&\\
3\ar[u]^{\gamma} \ar@/^/[r]^{\beta}&4\ar[u]_{\delta}\ar[r]^{\theta}\ar@/^/[l]^{\beta'} &5.
}$$
Consider the $k$-algebra $T=kQ/I$, where $I$ is generated by $\alpha'\alpha$, $\alpha\alpha'$, $\beta'\beta$, $\beta\beta'$, $\theta\beta$, $\alpha\gamma-\delta\beta$ and $\alpha'\delta-\gamma\beta'$. Let $e_i$ be the idempotent corresponding to the vertex $i$ and put $e=e_3+e_4+e_5$.
$A=(1-e)T(1-e)$ and $B=eTe$. Then $T=\left(
                                                                                 \begin{array}{cc}
                                                                                   A & M \\
                                                                                   0 & B \\
                                                                                 \end{array}
                                                                               \right)$ with $M=(1-e)Te$. Clearly, $A$ is self-injective and $B$ is CM-free. It is easy to see $_AM$ and $M_B$ are projective and then $_AM_B$ is compatible. In view of Theorem \ref{thm:1.2} (1), we get a triangle-equivalence $\D_{def}(T)\simeq\D_{def}(B)=\D_{sg}(B)$.
                                                                               However, since the global dimension of $A$ is infinite, the Schur functor $S_e$ does not induce an exact commutative diagram as that in Theorem \ref{thm:1.2} (2). Thus we conclude $S_e$ induces neither a singular equivalence $\D_{sg}(T)\simeq\D_{sg}(B)$ nor a triangle-equivalence $\underline{\Gproj T}\simeq\underline{\Gproj B}$.
}\end{exa}

\bigskip {\bf Acknowledgements}
\bigskip

This research was partially supported by NSFC (Grant No. 11501257, 11626179, 11671069, 11701455, 11771212), Qing Lan
Project of Jiangsu Province, Shaanxi Province Basic Research Program of Natural Science (Grant No. 2017JQ1012) and Fundamental Research Funds for the Central Universities (Grant No. JB160703, 2452020182).

\end{document}